\newcommand{\Z}{{\mathbb{Z}}}
\newcommand{\Q}{{\mathbb{Q}}}
\newcommand{\N}{{\mathbb{N}}}
\newcommand{\C}{{\mathbb{C}}}
\newcommand{\gl}{{\mathfrak{gl}}}
\newcommand{\slm}{{\mathfrak{sl}}}
\newcommand{\fg}{{\mathfrak{g}}}
\newcommand{\fh}{{\mathfrak{h}}}
\newcommand{\fn}{{\mathfrak{n}}}
\newcommand{\bi}{{\mathbf{i}}}
\newcommand{\be}{{\mathbf{e}}}
\newcommand{\cO}{{\mathcal{O}}}
\newcommand{\dalp}{{\dot{\alpha}}}
\newcommand{\dbet}{{\dot{\beta}}}
\newcommand{\dPhi}{{\dot{\Phi}}}
\newcommand{\dPi}{{\dot{\Pi}}}
\newcommand{\GL}{{\operatorname{GL}}}
\newcommand{\hgt}{{\operatorname{ht}}}
\renewcommand{\leq}{\leqslant}
\renewcommand{\geq}{\geqslant}
\newtheorem{thm}{Theorem}[section]
\newtheorem{cor}[thm]{Corollary}
\newtheorem{lem}[thm]{Lemma}
\newtheorem{prop}[thm]{Proposition}
\theoremstyle{definition}
\newtheorem{defn}[thm]{Definition}
\newtheorem{exmp}[thm]{Example}
\newtheorem{rema}[thm]{}
\theoremstyle{remark}
\newtheorem{rem}[thm]{Remark}
\numberwithin{equation}{section}
\begin{document}

\title{On the construction of semisimple Lie algebras and Chevalley groups}
\author{Meinolf Geck}
\address{IAZ - Lehrstuhl f\"ur Algebra\\Universit\"at Stuttgart\\ 
Pfaffenwaldring 57\\D--70569 Stuttgart\\ Germany}
\email{meinolf.geck@mathematik.uni-stuttgart.de}

\subjclass{Primary 17B45; Secondary 20G40}
%\keywords{Lie algebra, root system, Chevalley group} 
\date{}
%\dedicatory{To George Lusztig on his $70$th birthday} 
%\commby{Pham Huu Tiep}

\begin{abstract} Let $\fg$ be a semisimple complex Lie algebra. Recently,
Lusztig simplified the traditional construction of the corresponding 
Chevalley groups (of adjoint type) using the ''canonical basis'' of the 
adjoint representation of~$\fg$. Here, we present a variation of this 
idea which leads to a new, and quite elementary construction of~$\fg$ 
itself from its root system. An additional feature of this set-up 
is that it also gives rise to explicit Chevalley bases of $\fg$.
\end{abstract}

\maketitle

%%%%%%%%%%%%%%%%%%%%%%%%%%%%%%%%%%%%%%%%%%%%%%%%%%%%%%%%%%%%%%%%%%%%%%%
\section{Introduction} \label{sec0}

Let $\fg$ be a finite-dimensional semisimple Lie algebra over $\C$. In
a famous paper \cite{Ch}, Chevalley found an integral basis of $\fg$ and 
used this to construct new families of simple groups, now known as
{\em Chevalley groups}. In \cite{L4}, Lusztig described a simplified 
construction of these groups, by using a remarkable basis of the adjoint 
representation of $\fg$ on which the Chevalley generators $e_i, f_i \in 
\fg$ act via matrices with entries in $\N_0$. That basis originally
appeared in \cite{L1}, \cite{L2}, \cite{L2a}, even at the quantum group 
level;  subsequently, it could be interpreted as the {\em canonical basis} 
of the adjoint representation (see \cite{L6}, \cite{L5}). 

In this note, we turn Lusztig's argument around and show that this leads to 
a new way of actually constructing~$\fg$ from its root system. 
Usually, this is achieved by a subtle choice of signs in 
Chevalley's integral basis (Tits \cite{Ti}), or by taking a suitable 
quotient of a free Lie algebra (Serre \cite{S}). In our approach, we do not 
need to choose any signs, and we do not have to deal with free Lie 
algebras at all. 

Section~\ref{sec1} contains some preliminary results about root strings in 
a root system~$\Phi$. In Section~\ref{sec2}, we use the explicit formulae 
in \cite{L1}, \cite{L2}, \cite{L2a} to define certain linear maps $e_i,f_i$
on a finite-dimensional vector space. These maps satisfy relations 
which are to be expected of the Chevalley generators of a semisimple Lie 
algebra. In Section~\ref{sec3}, it is shown that $e_i,f_i$ indeed generate 
a semisimple Lie algebra $\fg$ with root system isomorphic to~$\Phi$. This 
set-up also gives rise to two Chevalley bases of $\fg$ which are explicitly 
determined by the two ``canonical'' orientations of the Dynkin diagram 
of~$\Phi$ in which every vertex is either a sink or a source; see 
Section~\ref{sec4}. (This seems conceptually simpler than the approach
via Hall algebras and the representation theory of quivers, see 
Ringel \cite{Ri} and Peng--Xiao \cite{PX}.)

We have made a certain attempt to keep the whole discussion as elementary
and self-contained as possible; it should be accessible at the level of 
introductory textbooks on Lie algebras, e.g., Erdmann--Wildon \cite{EW}.

Finally, we remark that Lusztig's simplified construction of adjoint 
Chevalley groups can be extended to groups which are not necessarily of 
adjoint type, avoiding the additional machinery required in \cite{Ch2}, 
\cite{St}; for further details see \cite{G1}. (See Lusztig \cite{L3} for a 
more sophisticated setting, which yields additional results and also 
produces arbitrary reductive algebraic groups.)

Our results are merely variations of ideas in \cite{L1}, \cite{L2},
\cite{L2a}, \cite{L4}. I~wish to thank George Lusztig for pointing out 
to me the remarks in~\cite{L4}, and for helpful comments about his earlier 
work on quantum groups and the interpretation of $e_i,f_i$ in terms of 
canonical bases. I also thank Lacri Iancu for a careful reading of the 
mansucript, Markus Reineke for pointing out Ringel's paper \cite{Ri},
and an unknown referee for a number of useful comments.

%%%%%%%%%%%%%%%%%%%%%%%%%%%%%%%%%%%%%%%%%%%%%%%%%%%%%%%%%%%%%%%%%%%%%%%
\section{Root systems and root strings} \label{sec1}

We begin by recalling basic facts about root systems; see \cite{B}, 
\cite{EW}, \cite{H}, \cite{S}. Let $E$ be a finite-dimensional vector space 
over $\Q$ and $(\;,\;) \colon E\times E\rightarrow \Q$ be a symmetric 
bilinear form such that $(e,e)>0$ for all $0\neq e\in E$. For each $0\neq 
e\in E$, we denote $e^\vee:=\frac{2}{(e,e)}e\in E$. Let $\Phi\subseteq E$ be 
a reduced crystallographic root system. Thus, $\Phi$ is a finite subset of 
$E\setminus \{0\}$ such that $E=\langle \Phi \rangle_\Q$; furthermore, 
we have for $\alpha,\beta\in \Phi$:
\begin{itemize}
\item if $\beta\neq \pm \alpha$, then $\alpha,\beta$ are linearly
independent in $E$;
\item we have $(\beta,\alpha^\vee)\in\Z$ and $\beta-(\beta,\alpha^\vee)
\alpha \in \Phi$.
\end{itemize}
We assume throughout that $\Phi$ is irreducible. Let $\Pi=\{\alpha_i\mid 
i\in I\}$ be a set of simple roots in $\Phi$, where $I$ is a finite index 
set. (One may take for $I$ the ``canonical index set'' in \cite[VI, no.~1.5,
Remarque~7]{B}.) Then $\Pi$ is a basis of $E$ and every $\alpha\in \Phi$ is 
a linear combination of $\Pi$ where either all coefficients are in 
$\Z_{\geq 0}$ or all coefficients are in $\Z_{\leq 0}$; correspondingly, we 
have a partition $\Phi=\Phi^+\cup \Phi^-$ where $\Phi^+$ are the positive 
roots and $\Phi^-=-\Phi^+$ are the negative roots. The matrix 
\[A=(a_{ij})_{i,j\in I}\qquad \mbox{where} \qquad a_{ij}:=(\alpha_j,
\alpha_i^\vee),\]
is called the Cartan matrix of $\Phi$ with respect to $\Pi$. We have
$a_{ii}=2$ and $a_{ij}\leq 0$ for $i\neq j$. Furthermore, it is known
that $A$ is independent of the choice of $\Pi$, up to simultaneous 
permutation of the rows and columns.  

\begin{rem} \label{rem1} Using the Cauchy--Schwartz inequality, the
above conditions immediately imply the following ''finiteness property'':
\begin{equation*}
\beta\neq\pm \alpha\qquad\Rightarrow\qquad (\beta,\alpha^\vee)(\alpha,
\beta^\vee) \in\{0,1,2,3\}.\tag{a}
\end{equation*}
Furthermore, if the value $3$ is attained for some $\alpha,\beta\in\Phi$,
then $\dim E=2$. (See \cite[VI, \S 4, no.~4.1]{B}.) There are a number of
constraints on the relative lengths of roots. For example, if $\beta 
\neq \pm\alpha$ and $(\alpha,\beta^\vee)=\pm 1$, then we must have 
$(\alpha,\alpha)\leq (\beta,\beta)$ (since $(\beta,\alpha^\vee)\in\Z$). 
Furthermore, we have (see \cite[VI, \S 1, no.~1.4]{B}): 
\begin{equation*}
|\{(\alpha,\alpha)\mid \alpha\in\Phi\}|\leq 2. \tag{b}
\end{equation*}
\end{rem} 

\begin{rema} {\bf Root strings.} \label{strgs} Given $\alpha, \beta\in \Phi$ 
such that $\beta \neq \pm \alpha$, we can uniquely define two integers 
$p,q\geq 0$ by the conditions that 
\[ \beta-q\alpha,\ldots,\beta-\alpha,\beta,\beta+\alpha,\ldots,
\beta+p\alpha\in \Phi\]
and $\beta-(q+1)\alpha\not\in\Phi$, $\beta+(p+1)\alpha\not\in\Phi$. The 
above sequence of roots is called the {\em $\alpha$-string through~$\beta$}. 
We then have (see \cite[VI, \S 1, no.~1.3]{B}):
\[ (\beta,\alpha^\vee)=q-p\qquad \mbox{and}\qquad 0\leq p+q\leq 3.\]
Furthermore, if $p+q=3$, then $(\beta-q\alpha,\alpha^\vee)=-p-q=-3$ and 
so we must have $\dim E=2$ (see Remark~\ref{rem1}).
\end{rema}

\begin{defn} \label{def1} In the above setting,
we let $m_\alpha^+(\beta):=p+1$ and $m_\alpha^-(\beta):=q+1$. 
Then $(\beta,\alpha^\vee)=m_\alpha^-(\beta)-m_\alpha^+(\beta)$ and 
$2\leq m_\alpha^-(\beta)+m_\alpha^+(\beta)\leq 5$. We also write
$m_i^\pm(\beta):=m^{\pm}_{\alpha_i}(\beta)$ if $\alpha=\alpha_i$ with 
$i \in I$.
\end{defn}

We now collect some results which will be useful in the sequel. 

\begin{lem} \label{lem1} Let $\alpha,\beta\in\Phi$. If $(\alpha,
\beta)>0$, then $\beta-\alpha\in\Phi$ or $\beta=\alpha$.
\end{lem}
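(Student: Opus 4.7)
The plan is to split into the trivial cases $\beta = \pm\alpha$ and then reduce to the root-string setup recalled just above the lemma.

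First I would dispose of the easy alternatives. If $\beta = \alpha$ we are done by the second alternative. If $\beta = -\alpha$, then $(\alpha,\beta) = -(\alpha,\alpha) < 0$ since $(\alpha,\alpha) > 0$ by positive-definiteness of $(\,,\,)$; this contradicts the hypothesis $(\alpha,\beta) > 0$. So we may assume $\beta \neq \pm\alpha$, which is precisely the hypothesis needed to speak of the $\alpha$-string through $\beta$.

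Now I would invoke the root-string data from \ref{strgs}: there exist integers $p,q \geq 0$ with
\[
\beta - q\alpha,\,\ldots,\,\beta,\,\ldots,\,\beta + p\alpha \in \Phi,
\qquad (\beta,\alpha^\vee) = q - p.
\]
The hypothesis $(\alpha,\beta) > 0$ gives
\[
(\beta,\alpha^\vee) = \frac{2(\alpha,\beta)}{(\alpha,\alpha)} > 0,
\]
hence $q - p > 0$, so $q \geq 1$. Consequently $\beta - \alpha$ lies in the $\alpha$-string through $\beta$, and therefore $\beta - \alpha \in \Phi$, as required.

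There is no real obstacle here; the lemma is essentially an immediate translation of $(\alpha,\beta) > 0$ into $q > p$ via $(\beta,\alpha^\vee) = q - p$. The only thing to be careful about is ruling out the degenerate case $\beta = -\alpha$ before invoking the root-string formalism, since $\alpha$-strings are only defined for $\beta \neq \pm\alpha$.
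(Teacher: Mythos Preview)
Your proof is correct and is precisely the standard argument one finds in the references the paper cites (Bourbaki, Erdmann--Wildon, Humphreys); the paper itself gives no proof here, only the citation. Your use of the root-string identity $(\beta,\alpha^\vee)=q-p$ from \ref{strgs}, together with the case split on $\beta=\pm\alpha$, is exactly how those sources unfold the result.
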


\begin{proof} See \cite[VI, \S 1, no.~1.3]{B}, 
\cite[\S 11.2]{EW} or \cite[\S 9.4]{H}. 
\end{proof}

\begin{lem} \label{lem2} Let $\alpha\in\Phi$ and $i,j\in I$, $i\neq j$.
Assume that $\alpha+\alpha_i-\alpha_j\in\Phi$.

{\rm (a)} If $\alpha+\alpha_i\in\Phi$ and $\alpha\neq\alpha_j$, then 
$\alpha-\alpha_j\in\Phi$.

{\rm (b)} If $\alpha-\alpha_j\in\Phi$ and $\alpha\neq -\alpha_i$, 
then $\alpha+\alpha_i\in\Phi$.
\end{lem}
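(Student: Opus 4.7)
The plan is to prove (a) by two applications of Lemma~\ref{lem1}; part~(b) will follow from (a) by the symmetry $\alpha \mapsto -\alpha$, $i \leftrightarrow j$, which exchanges the hypotheses and conclusions of the two statements. I would set $\beta := \alpha + \alpha_i - \alpha_j \in \Phi$, and write $c := (\alpha, \alpha_j^\vee)$, $d := (\alpha, \alpha_i^\vee)$, $a := (\alpha_j, \alpha_i^\vee) \leq 0$. First I would compute $(\alpha_i, \beta) = \tfrac{1}{2}(d + 2 - a)(\alpha_i, \alpha_i)$, so when $d \geq a - 1$ this is strictly positive; since $\beta \neq \alpha_i$ (because $\alpha \neq \alpha_j$), Lemma~\ref{lem1} yields $\alpha - \alpha_j = \beta - \alpha_i \in \Phi$, and we are done.

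Otherwise $d \leq a - 2 \leq -2$. In this case I would aim to show $c \geq 1$, so that Lemma~\ref{lem1} applied to $\alpha_j$ and $\alpha$ delivers the conclusion. First I would rule out $d = -3$: Remark~\ref{rem1}(a) then forces $(\alpha_i, \alpha^\vee) = -1$ and $\dim E = 2$, so $\Phi = G_2$; inspection shows $\alpha_i$ must be short, making $\alpha_j$ long with $a = -3$, contradicting $d \leq a - 2 = -5$. So $d = -2$ and $a = 0$, and $(\alpha_i, \alpha^\vee) = -1$ by Remark~\ref{rem1}(a), giving $\|\alpha\|^2 = 2\|\alpha_i\|^2$. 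Next I would study the $\alpha_j$-string through $\alpha + \alpha_i$: it contains $\beta$, so its parameters $p', q'$ satisfy $q' \geq 1$ and $q' - p' = c + a = c$; the constraint $p' + q' \leq 3$ yields $c \geq -1$, and $c = -1$ would saturate $p' + q' = 3$, forcing $\dim E = 2$ and so making $\Phi$ a rank-$2$ irreducible system with orthogonal simple roots, which is impossible since $A_2, B_2, G_2$ all have non-orthogonal simples. Hence $c \geq 0$.

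The last corner case $c = 0$ I would eliminate using the root-system axiom $\beta - (\beta, \alpha^\vee)\alpha \in \Phi$. With $(\alpha_i, \alpha^\vee) = -1$ and $(\alpha_j, \alpha^\vee) = 0$ (from $c = 0$), one computes $(\beta, \alpha^\vee) = 2 - 1 - 0 = 1$, so the axiom gives $\beta - \alpha = \alpha_i - \alpha_j \in \Phi$. But the difference of two distinct simple roots has mixed signs in the basis $\Pi$ and so cannot lie in $\Phi^+ \cup \Phi^-$: contradiction. Hence $c \geq 1$, completing the proof of (a). The hard part is this final corner case, which requires the reflection-type trick rather than a direct Lemma~\ref{lem1} application; the rest is routine root-string bookkeeping.
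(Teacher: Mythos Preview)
Your proof is correct, but it takes a noticeably longer route than the paper's. Both arguments ultimately establish the same dichotomy---either $(\beta,\alpha_i^\vee)>0$ or $(\alpha_j,\alpha)>0$---and then finish with Lemma~\ref{lem1}; the difference is that you split cases on the first quantity while the paper splits on $m:=(\alpha_j,\alpha^\vee)$. In the paper's case $m\leq 0$, the single observation $\beta-\alpha=\alpha_i-\alpha_j\notin\Phi$ combined with the contrapositive of Lemma~\ref{lem1} gives $(\beta,\alpha^\vee)\leq 0$, i.e.\ $(\alpha_i,\alpha^\vee)\leq m-2\leq -2$; Remark~\ref{rem1}(a) then forces $(\alpha,\alpha_i^\vee)=-1$, and one line of arithmetic yields $(\beta,\alpha_i^\vee)=1-(\alpha_j,\alpha_i^\vee)\geq 1$, so Lemma~\ref{lem1} applies to $\beta$ and $\alpha_i$. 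Your Case~2 proves the contrapositive of this same implication, but at the cost of three sub-cases (eliminating $d=-3$, $c=-1$, $c=0$), a rank-two classification argument for $G_2$, a root-string length bound, and finally the reflection axiom. The crucial fact $\alpha_i-\alpha_j\notin\Phi$ appears in both proofs, but the paper deploys it at the outset to short-circuit the analysis, whereas you only reach it at the very end. One minor slip: when you write $q'-p'=c+a$, the second summand should be $(\alpha_i,\alpha_j^\vee)$ rather than $a=(\alpha_j,\alpha_i^\vee)$; this is harmless since both vanish once $a=0$. Your reduction of (b) to (a) via $\alpha\mapsto -\alpha$, $i\leftrightarrow j$ matches the paper exactly.
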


\begin{proof} (a) If $m:=(\alpha_j, \alpha^\vee)>0$, then $\alpha-
\alpha_j \in\Phi$ by Lemma~\ref{lem1}. Now assume that $m\leq 0$ 
and set $\beta:=\alpha+\alpha_i-\alpha_j$. Then $\beta-\alpha\not\in\Phi$ 
and $\beta\neq \alpha$. Hence, by Lemma~\ref{lem1}, we have $0\geq (\beta,
\alpha^\vee)=2+(\alpha_i, \alpha^\vee)-m$ and so $(\alpha_i,\alpha^\vee) 
\leq -2$. Since $\alpha\neq \pm\alpha_i$, the condition in 
Remark~\ref{rem1}(a) implies that $(\alpha, \alpha_i^\vee)=-1$ and so 
$(\beta,\alpha_i^\vee)=1-(\alpha_j, \alpha_i^\vee)\geq 1$. Hence, 
$\beta-\alpha_i=\alpha-\alpha_j \in\Phi$, again by Lemma~\ref{lem1}.

(b) Apply (a) to $-\alpha+\alpha_j-\alpha_i\in\Phi$, exchanging the roles
of $\alpha_i$, $\alpha_j$.
\end{proof}

\begin{lem} \label{lem3} Let $\alpha\in\Phi$ and $i,j\in I$, $i\neq j$.
Assume that $\alpha+\alpha_i\in\Phi$, $\alpha-\alpha_j \in\Phi$ and 
$\alpha+\alpha_i-\alpha_j\in\Phi$. Then $m_i^-(\alpha-\alpha_j) 
m_j^+(\alpha)=m_j^+(\alpha+\alpha_i)m_i^-(\alpha)$. (Note that the
assumptions imply that $\alpha\neq \pm\alpha_i$, $\alpha\neq \pm\alpha_j$, 
$\alpha+\alpha_i\neq \pm\alpha_j$, $\alpha-\alpha_j\neq \pm\alpha_i$.) 
\end{lem}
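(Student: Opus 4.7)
The plan is to set up notation, use the preceding material to cut the problem down to a finite combinatorial question, and then verify the identity by a case analysis in the rank-$2$ subsystem generated by $\alpha_i$ and $\alpha_j$.

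First, I would abbreviate $A := m_i^-(\alpha)$, $A' := m_i^-(\alpha - \alpha_j)$, $B := m_j^+(\alpha)$, $B' := m_j^+(\alpha + \alpha_i)$, so the identity to prove is $A'B = AB'$. The hypotheses $\alpha+\alpha_i,\alpha-\alpha_j,\alpha+\alpha_i-\alpha_j \in \Phi$ immediately give $m_i^+(\alpha) \geq 2$, $m_i^+(\alpha-\alpha_j) \geq 2$, $m_j^-(\alpha) \geq 2$, and $m_j^-(\alpha+\alpha_i) \geq 2$. Combined with the bound $m^+ + m^- \leq 5$ from Definition~\ref{def1}, this forces $A, A', B, B' \in \{1,2,3\}$, making the problem finitary.

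Next, I would set $s := (\alpha_i,\alpha_j^\vee)$, $t := (\alpha_j,\alpha_i^\vee)$, and use the formula $(\beta,\alpha^\vee) = m_\alpha^-(\beta) - m_\alpha^+(\beta)$ to write
\[ A - m_i^+(\alpha) = (\alpha,\alpha_i^\vee),\qquad A' - m_i^+(\alpha-\alpha_j) = (\alpha,\alpha_i^\vee) - t, \]
together with the analogous formulae for $B, B'$. By Remark~\ref{rem1}(a), the pair $(s,t)$ is restricted to the short list $\{(0,0),(-1,-1),(-1,-2),(-2,-1),(-1,-3),(-3,-1)\}$. The substitution $\alpha \mapsto -\alpha$, $i\leftrightarrow j$ swaps $(s,t)$ with $(t,s)$ while interchanging $(A,B)$ with $(B,A)$ and $(A',B')$ with $(B',A')$, leaving the identity unchanged; this symmetry lets me assume $|s| \leq |t|$.

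The last step is the case analysis, and this is the hard part. For each admissible $(s,t)$ and each compatible pair of integer values $(\alpha,\alpha_i^\vee),(\alpha,\alpha_j^\vee)$, I would use Lemma~\ref{lem1} and Lemma~\ref{lem2} to decide which of the small ``rectangle'' of vectors $\alpha + k\alpha_i + \ell\alpha_j$ lie in $\Phi$, then read off $A,A',B,B'$ from the resulting root-string data and verify $A'B = AB'$ by inspection. When the rank-$2$ subsystem is of type $B_2$ or $G_2$, root strings can have length up to $3$ or $4$, and several neighbouring roots must be tracked; the main tool is the repeated application of Lemma~\ref{lem2}, which propagates the given rectangle of four roots to further roots of the form $\alpha + k\alpha_i - \ell\alpha_j$. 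Thanks to the uniform bound $A,A',B,B' \leq 3$, however, the number of subcases is small, and many of them reduce to the trivial equality $1 \cdot k = k \cdot 1$.
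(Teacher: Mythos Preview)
Your outline heads in the right direction, but it misses the single observation that makes the paper's argument short, and as written your case variables do not determine the quantities $A,A',B,B'$. The paper's first move is to verify by hand that when $\dim E=2$ (so $\Phi$ is $A_2$, $B_2$ or $G_2$) the hypotheses $\alpha+\alpha_i,\alpha-\alpha_j,\alpha+\alpha_i-\alpha_j\in\Phi$ cannot all hold. Once $\dim E\geq 3$, Remark~\ref{rem1}(a) forces every root string to have at most three terms; together with $\alpha+\alpha_i\in\Phi$ this pins down the $\alpha_i$-string through $\alpha$ to one of three possibilities, and one reads off that each of $A,A',B,B'$ lies in $\{1,2\}$ and equals $2$ exactly when the relevant Cartan integer vanishes (e.g.\ $A=2\Leftrightarrow(\alpha,\alpha_i^\vee)=0$). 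The paper's three-case analysis is then organised by which of these integers vanish, and the one nontrivial subcase in Case~3 is disposed of using the two-lengths constraint of Remark~\ref{rem1}(b).

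By contrast, the data $(s,t,(\alpha,\alpha_i^\vee),(\alpha,\alpha_j^\vee))$ you propose to case on do not by themselves fix $A,A',B,B'$: if strings of length four are still on the table, then $(\alpha,\alpha_i^\vee)=-1$ with $\alpha+\alpha_i\in\Phi$ is consistent both with the string $(\alpha,\alpha+\alpha_i)$ (so $A=1$) and with $(\alpha-\alpha_i,\alpha,\alpha+\alpha_i,\alpha+2\alpha_i)$ (so $A=2$). Lemmas~\ref{lem1} and~\ref{lem2} only produce roots; they do not rule them out, so they cannot distinguish these two possibilities. You can repair this by noting that $(s,t)\in\{(-1,-3),(-3,-1)\}$ forces $\dim E=2$ (hence $\Phi=G_2$, where one checks directly that the hypotheses fail), while for all other $(s,t)$ the irreducibility of $\Phi$ excludes $G_2$ and hence any string of length four --- but that is exactly the paper's opening reduction, after which your $\{1,2,3\}$ collapses to $\{1,2\}$ and the paper's short case split applies. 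Finally, note that $\alpha$ need not lie in the span of $\alpha_i,\alpha_j$, so the phrase ``case analysis in the rank-$2$ subsystem'' is a little misleading: what actually controls the string lengths through $\alpha$ is the global type of $\Phi$, not the local type of $\langle\alpha_i,\alpha_j\rangle$.
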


\begin{proof} It is easy to check (by an explicit verification for types
$A_2$, $B_2$, $G_2$) that the above assumptions can not be satisfied if
$\dim E=2$. So assume now that $\dim E\geq 3$. Then, by Remark~\ref{rem1}(a), 
we have $(\beta,\gamma^\vee)\in\{0,\pm 1, \pm 2\}$ for all $\beta,
\gamma \in\Phi$, and so every root string in $\Phi$ can have at most 
$3$ terms.  

Now, we have $\alpha,\alpha+\alpha_i\in \Phi$ and so $\alpha\neq\pm\alpha_i$. 
Hence, the $\alpha_i$-string through $\alpha$ must be $(\alpha,
\alpha+\alpha_i)$ or $(\alpha-\alpha_i, \alpha,\alpha+\alpha_i)$ or 
$(\alpha,\alpha+\alpha_i,\alpha+2\alpha_i)$. This yields:
\[(\alpha,\alpha_i^\vee)\leq 0\qquad \mbox{and}\qquad m_i^-(\alpha)=
\left\{\begin{array}{cl} 2 &\mbox{ if $(\alpha,
\alpha_i^\vee)=0$},\\ 1 &\mbox{ otherwise}.\end{array}\right.\]
Similarly, we have $\alpha-\alpha_j,\alpha\in\Phi$ and $\alpha\neq \pm
\alpha_j$; this yields:
\[(\alpha,\alpha_j^\vee)\geq 0\qquad \mbox{and}\qquad 
m_j^+(\alpha)=\left\{\begin{array}{cl} 2 & \mbox{ if $(\alpha,
\alpha_j^\vee)=0$},\\ 1 & \mbox{ otherwise}.\end{array}\right.\]
Furthermore, we have $\alpha-\alpha_j,\alpha-\alpha_j+\alpha_i\in \Phi$ and
$\alpha-\alpha_j\neq \pm\alpha_i$; hence, 
\[(\alpha-\alpha_j,\alpha_i^\vee)\leq 0\quad \mbox{and}\quad 
m_i^-(\alpha-\alpha_j)=\left\{\begin{array}{cl} 2 & \mbox{ if
$(\alpha-\alpha_j,\alpha_i^\vee)=0$},\\ 1 & \mbox{ otherwise},
\end{array} \right.\]
Finally, we have $\alpha+\alpha_i-\alpha_j,\alpha+\alpha_i\in\Phi$ and
$\alpha+\alpha_i\neq \pm\alpha_j$; hence, 
\[(\alpha+\alpha_i,\alpha_j^\vee)\geq 0\quad \mbox{and}\quad 
m_j^+(\alpha+\alpha_i)=\left\{\begin{array}{cl} 2 & \mbox{ if
$(\alpha+\alpha_i,\alpha_j^\vee)=0$},\\ 1 & \mbox{ otherwise}.
\end{array} \right.\]
We now distinguish cases according to when one of the above terms equals~$2$. 

\smallskip
\noindent {\em Case 1}. Assume that $m_i^-(\alpha)=2$ and so $(\alpha,
\alpha_i^\vee)=0$. Since $\alpha_i- \alpha_j\not\in\Phi$, we have 
$(\alpha+\alpha_i-\alpha_j, \alpha^\vee)\leq 0$ and so $(\alpha_j,
\alpha^\vee)\geq 2$, hence $m_j^+(\alpha)=1$. Using Remark~\ref{rem1}(a),
we conclude that $(\alpha_j,\alpha^\vee)=2$ and $(\alpha,\alpha_j^\vee)=1$.
Furthermore, $0\geq (\alpha-\alpha_j,\alpha_i^\vee)=-(\alpha_j,\alpha_i^\vee)
\geq 0$ and so $(\alpha_j,\alpha_i^\vee)=0$; hence, $(\alpha-
\alpha_j, \alpha_i^\vee)=0$ and $(\alpha+\alpha_i,\alpha_j^\vee)=1$. 
Then $m_i^-(\alpha-\alpha_j)=2$ and $m_j^+(\alpha+\alpha_i)=1$, so the 
desired identity holds.

\smallskip
\noindent {\em Case 2}. Assume that $m_j^+(\alpha)=2$ and so $(\alpha,
\alpha_j^\vee)=0$. Then we can apply Case~1 to $-\alpha+\alpha_j-\alpha_i$
(with the roles of $\alpha_i$, $\alpha_j$ exchanged). Consequently, we 
obtain that $m_j^-(-\alpha-\alpha_i)m_i^+(-\alpha)=m_i^+(-\alpha+\alpha_j)
m_j^-(-\alpha)$. It remains to use the fact that $m^-_\beta(\gamma)=
m_{\beta}^+(-\gamma)$ for all $\beta,\gamma\in \Phi$ such that 
$\gamma\neq \pm \beta$. 

\smallskip
\noindent {\em Case 3}. We can now assume that $m_i^-(\alpha)=
m_j^+(\alpha)=1$; then $(\alpha,\alpha_i^\vee)<0$ and $(\alpha,
\alpha_j^\vee)>0$. We must show that $m_j^+(\alpha+\alpha_i)=
m_i^-(\alpha-\alpha_j)$. 

Assume, if possible, that $m_i^-(\alpha-\alpha_j)=2$ and $m_j^+(\alpha+
\alpha_i)=1$. This means that $(\alpha-\alpha_j,\alpha_i^\vee)=0$ and so
$(\alpha_j,\alpha_i^\vee)=(\alpha,\alpha_i^\vee)<0$; furthermore, $(\alpha+
\alpha_i,\alpha_j^\vee)>0$. Hence, $0<-(\alpha_i,\alpha_j^\vee)<
(\alpha, \alpha_j^\vee)\leq 2$ and so $(\alpha_i,\alpha_j^\vee)=-1$,
$(\alpha,\alpha_j^\vee)=2$. Using Remark~\ref{rem1}(a), we conclude that
$(\alpha,\alpha)>(\alpha_j,\alpha_j)\geq (\alpha_i,\alpha_i)$; but then
Remark~\ref{rem1}(b) implies that $(\alpha_i,\alpha_i)=(\alpha_j,\alpha_j)$.
This yields $(\alpha,\alpha_i^\vee)=(\alpha_j, \alpha_i^\vee)=(\alpha_i, 
\alpha_j^\vee)=-1$, which in turn implies that $(\alpha,\alpha)\leq 
(\alpha_i,\alpha_i)$, a contradiction.

On the other hand, if we assume that $m_i^-(\alpha-\alpha_j)=1$ and 
$m_j^+(\alpha+ \alpha_i)=2$, then we can apply the previous argument to 
$-\alpha+\alpha_j-\alpha_i$ (cf.\ Case~2) and, again, obtain a contradiction.
\end{proof}

\begin{rem} The above proof simplifies drastically in the simply-laced
case. (The ``ADE'' case.) In this case, every root string has at most 
$2$ terms, and so we have $m_i^-(\alpha-\alpha_j)=m_j^+(\alpha)=
m_j^+(\alpha+ \alpha_i)=m_i^-(\alpha)=1$.
\end{rem}

%%%%%%%%%%%%%%%%%%%%%%%%%%%%%%%%%%%%%%%%%%%%%%%%%%%%%%%%%%%%%%%%%%%%%%%
\section{A canonical model for the adjoint representation} \label{sec2}

We keep the notation of the previous section, where $\Phi$ is a root system 
in $E$ and $\Pi=\{\alpha_i\mid i\in I\}$ is a fixed set of simple roots;
recall that $\alpha^\vee=\frac{2}{(\alpha,\alpha)}\alpha\in E$ for $\alpha
\in\Phi$. Following Lusztig \cite[1.4]{L1}, \cite[2.1]{L2}, \cite[0.5]{L2a}, 
we now consider a $\C$-vector space $M$ with a basis $\{u_i\mid i\in I\}
\cup \{v_\alpha\mid \alpha \in\Phi\}$ and define linear maps 
\[e_i\colon M\rightarrow M,\qquad f_i\colon M\rightarrow M,\qquad
h_i\colon M\rightarrow M\qquad (i\in I)\]
by the following formulae, where $j\in I$ and $\alpha\in\Phi$.
\begin{alignat*}{2}
e_i(u_j) &:= |(\alpha_i,\alpha_j^\vee)|v_{\alpha_i},& \qquad e_i(v_\alpha) 
&:= \left\{\begin{array}{cl} m_i^-(\alpha)
v_{\alpha+\alpha_i}  & \mbox{ if $\alpha+\alpha_i \in\Phi$},\\
u_i & \mbox{ if $\alpha=-\alpha_i$},\\ 0  & \mbox{ otherwise},
\end{array}\right.\\ f_i(u_j) &:= |(\alpha_i,\alpha_j^\vee)|
v_{-\alpha_i},& \qquad f_i(v_\alpha)& :=\left\{\begin{array}{cl} 
m_i^+(\alpha) v_{\alpha-\alpha_i}  & \mbox{ if $\alpha-\alpha_i 
\in\Phi$},\\ u_i & \mbox{ if $\alpha=\alpha_i$},\\ 0 & \mbox{ otherwise}, 
\end{array}\right.\\ h_i(u_j) &:=0, &\qquad h_i(v_\alpha)&:=(\alpha,
\alpha_i^\vee) v_\alpha.
\end{alignat*}
Recall from Definition~\ref{def1} that $m_i^\pm(\alpha)=
m^{\pm}_{\alpha_i}(\alpha)\geq 1$. Note that all entries of the matrices of 
$e_i$, $f_i$ with respect to the basis of $M$ are non-negative integers. 

In \cite{L1}, \cite{L2} (see also \cite[1.15]{L5}) it is shown that the 
adjoint representation of a semisimple Lie algebra $\fg$ admits a basis on 
which the Chevalley generators of $\fg$ act via the above maps; hence,
$e_i,f_i, h_i$ must satisfy certain relations. We now verify directly, i.e.,
without reference to $\fg$, that these relations hold. 

A similar verification can also be found in \cite[5A.5]{J}; however, full 
details for the most difficult case (which is Case~2 in Lemma~\ref{lem24}) 
are not given there. In any case, since \cite[5A.5]{J} also deals with the 
quantum group case, the argument here is technically much simpler, so we give 
the details below.

\begin{rem} \label{rem20} It is obvious that the maps $e_i,f_i,h_i$ are 
all non-zero. With respect to the given basis of $M$, each $h_i$ is 
represented by a diagonal matrix, so it is clear that $h_i\circ h_j=
h_j\circ h_i$ for all $i,j$. Furthermore, the maps $h_i$ ($i\in I$) are 
actually linearly independent. Indeed, assume we have a relation 
$\sum_{j\in I} x_jh_j=0$ where $x_j\in \C$. Evaluating this relation at
$v_{\alpha_i}$ yields $(\sum_{j\in I} x_j(\alpha_i,\alpha_j^\vee)) 
v_{\alpha_i}=0$ for all $i\in I$. Since the Cartan matrix $A$ is 
invertible, we must have $x_j=0$ for all $j\in I$.
\end{rem}

\begin{rem} \label{lem20} We define a further linear map $\omega\colon 
M\rightarrow M$ by $\omega(u_j)=u_j$ for $j\in I$ and $\omega(v_\alpha)=
v_{-\alpha}$ for $\alpha\in \Phi$; note that $\omega^2=\mbox{id}_M$. 
Then one easily checks that
\[ \omega \circ e_i=f_i\circ \omega\quad\mbox{and}\quad 
\omega\circ h_i=-h_i \circ \omega \qquad \mbox{for all $i\in I$}.\]
(Just note that $m_i^-(\alpha)=m_i^+(-\alpha)$ for all 
$\alpha\in\Phi$ and $i\in I$ such that $\alpha\neq\pm\alpha_i$.) 
\end{rem}

\begin{lem} \label{lem22} {\rm (a)} We have  $h_j\circ e_i-e_i\circ h_j=
(\alpha_i,\alpha_j^\vee)e_i$ for all $i,j$.

{\rm (b)} We have $h_j\circ f_i-f_i\circ h_j= -(\alpha_i,\alpha_j^\vee)f_i$ 
for all $i,j$.
\end{lem}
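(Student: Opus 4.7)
The plan is to verify both identities by a direct case-by-case computation on the basis $\{u_k\mid k\in I\}\cup\{v_\alpha\mid \alpha\in\Phi\}$ of $M$, using the explicit formulae defining $e_i, f_i, h_j$. For part~(a), evaluating on $u_k$ is immediate: one has $h_j(u_k)=0$, while $e_i(u_k)$ is a scalar multiple of the $h_j$-eigenvector $v_{\alpha_i}$ with eigenvalue $(\alpha_i,\alpha_j^\vee)$, so both sides produce $|(\alpha_i,\alpha_k^\vee)|(\alpha_i,\alpha_j^\vee)v_{\alpha_i}$.

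For the evaluation on $v_\alpha$, I would distinguish the three branches in the definition of $e_i(v_\alpha)$. If $\alpha+\alpha_i\in\Phi$, then using that $v_{\alpha+\alpha_i}$ is an $h_j$-eigenvector with eigenvalue $(\alpha+\alpha_i,\alpha_j^\vee)=(\alpha,\alpha_j^\vee)+(\alpha_i,\alpha_j^\vee)$, both sides collapse to $m_i^-(\alpha)(\alpha_i,\alpha_j^\vee)v_{\alpha+\alpha_i}$. If $\alpha=-\alpha_i$, one exploits that $h_j(u_i)=0$ while $h_j(v_{-\alpha_i})=-(\alpha_i,\alpha_j^\vee)v_{-\alpha_i}$ to see that both sides equal $(\alpha_i,\alpha_j^\vee)u_i$. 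In the remaining case, $e_i$ annihilates $v_\alpha$, and so does $(\alpha,\alpha_j^\vee)e_i$, so both sides vanish.

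For part~(b), rather than repeating a parallel computation I would deduce it from (a) by means of the involution $\omega$ of Remark~\ref{lem20}. The relations $\omega\circ e_i=f_i\circ\omega$, $\omega\circ h_j=-h_j\circ\omega$ and $\omega^2=\mathrm{id}_M$ imply $\omega\circ e_i\circ\omega=f_i$ and $\omega\circ h_j\circ\omega=-h_j$. Conjugating the identity of (a) by $\omega$ on both sides then yields $-(h_j\circ f_i-f_i\circ h_j)=(\alpha_i,\alpha_j^\vee)f_i$, which is precisely (b).

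I do not anticipate any serious obstacle: the whole argument is a mechanical verification on basis vectors, and the only point that demands genuine care is the sign bookkeeping in the case $\alpha=-\alpha_i$ of part~(a), where the non-trivial scalar arises from $h_j$ acting on $v_{-\alpha_i}$ rather than on its image $u_i=e_i(v_{-\alpha_i})$ (which is killed by $h_j$).
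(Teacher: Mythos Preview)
Your proposal is correct and follows essentially the same approach as the paper: a direct case-by-case verification of (a) on the basis vectors $u_k$ and $v_\alpha$ (with the three branches of the definition of $e_i(v_\alpha)$), and then the deduction of (b) from (a) via conjugation by the involution~$\omega$ of Remark~\ref{lem20}. The paper's write-up is slightly terser but the computations and the use of $\omega$ are identical.
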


\begin{proof} First we prove (a). For any $k\in I$, we have 
\[  (h_j\circ e_i-e_i\circ h_j)(u_k)=|(\alpha_i,\alpha_k^\vee)|h_j
(v_{\alpha_i})=|(\alpha_i,\alpha_k^\vee)|(\alpha_i,\alpha_j^\vee) 
v_{\alpha_i},\]
which is the same as $(\alpha_i,\alpha_j^\vee)e_i(u_k)$.
For any $\alpha\in \Phi$, we obtain
\[(h_j\circ e_i-e_i\circ h_j)(v_\alpha)=h_j(e_i(v_\alpha))-(\alpha,
\alpha_j^\vee)e_i(v_\alpha).\]
Now, if $\alpha+\alpha_i\not\in\Phi$, then the result is $0$, and this 
is also the result of $(\alpha, \alpha_j^\vee)e_i(v_\alpha)$. If 
$\alpha=-\alpha_i$, then the result is $h_j(u_i)-(\alpha,\alpha_j^\vee)u_i=
(\alpha_i,\alpha_j^\vee)u_i$, which is also the result of $(\alpha_i,
\alpha_j^\vee)e_i(v_\alpha)$. Finally, if $\alpha+\alpha_i\in\Phi$, then 
\begin{align*}
(h_j\circ e_i-e_i\circ h_j)(v_\alpha)&=m_i^-(\alpha)(\alpha+\alpha_i,
\alpha_j^\vee)v_{\alpha+\alpha_i} -(\alpha,\alpha_j^\vee)m_i^-(\alpha)
v_{\alpha+\alpha_i}\\
&=m_i^-(\alpha)(\alpha_i,\alpha_j^\vee)v_{\alpha+\alpha_i} 
=(\alpha_i,\alpha_j^\vee)e_i(v_\alpha),
\end{align*}
as required. Then (b) follows using the map $\omega\colon M\rightarrow
M$ in Remark~\ref{lem20}.
\end{proof}

\begin{lem} \label{lem23} We have $e_i\circ f_i-f_i\circ e_i=h_i$ for all~$i$.
\end{lem}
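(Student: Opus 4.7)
The plan is to verify the identity on each basis vector. Since $M$ has basis $\{u_j\mid j\in I\}\cup\{v_\alpha\mid \alpha\in\Phi\}$, it suffices to check that $(e_i\circ f_i-f_i\circ e_i)(u_j)=h_i(u_j)=0$ for all $j\in I$, and $(e_i\circ f_i-f_i\circ e_i)(v_\alpha)=(\alpha,\alpha_i^\vee)v_\alpha$ for all $\alpha\in\Phi$.

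For the first check, note that $e_i(u_j)$ is a scalar multiple of $v_{\alpha_i}$ and $f_i(u_j)$ is the same scalar multiple of $v_{-\alpha_i}$. Applying $f_i$ to $v_{\alpha_i}$ gives $u_i$, and applying $e_i$ to $v_{-\alpha_i}$ also gives $u_i$. So $e_i\circ f_i$ and $f_i\circ e_i$ produce the same multiple of $u_i$ on $u_j$ and cancel, matching $h_i(u_j)=0$.

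The main work is the case of $v_\alpha$. I would split into four subcases:
\emph{(i)} $\alpha=\alpha_i$: then $e_i(v_\alpha)=0$ (reducedness of $\Phi$ kills $2\alpha_i$) while $f_i(v_\alpha)=u_i$, so $e_i\circ f_i$ returns $2v_{\alpha_i}=h_i(v_{\alpha_i})$.
\emph{(ii)} $\alpha=-\alpha_i$: symmetric, yielding $-2v_{-\alpha_i}$.
\emph{(iii)} $\alpha\neq\pm\alpha_i$ with at least one of $\alpha\pm\alpha_i$ in $\Phi$: this is the generic case treated below.
\emph{(iv)} $\alpha\neq\pm\alpha_i$ with both $\alpha\pm\alpha_i\notin\Phi$: then $e_i(v_\alpha)=f_i(v_\alpha)=0$, and the $\alpha_i$-string through $\alpha$ is just $(\alpha)$, so $p=q=0$ and $(\alpha,\alpha_i^\vee)=0$, making both sides zero.

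For the generic case, the key observation is that moving one step along the $\alpha_i$-string shifts the string parameters by $1$: if $\alpha+\alpha_i\in\Phi$ then $m_i^+(\alpha+\alpha_i)=m_i^+(\alpha)-1$, and if $\alpha-\alpha_i\in\Phi$ then $m_i^-(\alpha-\alpha_i)=m_i^-(\alpha)-1$. Combined with $m_i^-(\alpha)-m_i^+(\alpha)=(\alpha,\alpha_i^\vee)$ from Definition~\ref{def1}, a direct computation gives
\[
e_i(f_i(v_\alpha))-f_i(e_i(v_\alpha))=\bigl(m_i^+(\alpha)(m_i^-(\alpha)-1)-m_i^-(\alpha)(m_i^+(\alpha)-1)\bigr)v_\alpha=(\alpha,\alpha_i^\vee)v_\alpha,
\]
where any ``missing'' term in the bracket (when $\alpha+\alpha_i\notin\Phi$ or $\alpha-\alpha_i\notin\Phi$) is automatically $0$ because the corresponding factor $m_i^+(\alpha)-1$ or $m_i^-(\alpha)-1$ vanishes. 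This is exactly $h_i(v_\alpha)$.

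I do not expect any real obstacle: no analogue of the delicate Lemma~\ref{lem3} enters, because only a single simple root is involved. The only care required is bookkeeping in the four subcases, particularly making sure the factors $m_i^\pm$ behave correctly at the endpoints of the root string, which is precisely what ensures that the computation in the third subcase uniformly covers the boundary cases where only one of $\alpha\pm\alpha_i$ lies in $\Phi$.
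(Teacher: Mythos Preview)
Your proof is correct and follows essentially the same approach as the paper: verify the identity on each basis vector, handle $u_j$ directly, and for $v_\alpha$ split according to whether $\alpha=\pm\alpha_i$ and which of $\alpha\pm\alpha_i$ lie in $\Phi$, using the shift identities $m_i^+(\alpha+\alpha_i)=m_i^+(\alpha)-1$, $m_i^-(\alpha-\alpha_i)=m_i^-(\alpha)-1$ together with $m_i^-(\alpha)-m_i^+(\alpha)=(\alpha,\alpha_i^\vee)$. The only difference is cosmetic: the paper treats the three sub-cases of your case~(iii) separately, whereas you absorb them into a single formula by observing that the vanishing factor $m_i^\pm(\alpha)-1$ at an endpoint of the root string makes the ``missing'' term zero automatically.
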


\begin{proof} For $k\in I$, we have $h_i(u_k)=0$ and $e_i(f_i(u_k))=
|(\alpha_i,\alpha_k^\vee)|u_i=f_i(e_i(u_k))$, as required. Now let 
$\alpha\in\Phi$. Then we must show that 
\begin{equation*}
e_i(f_i(v_\alpha))-f_i (e_i(v_\alpha))=(\alpha,\alpha_i^\vee) v_\alpha.
\tag{$*$}
\end{equation*}
If $\alpha=\alpha_i$, then both sides of ($*$) are equal to 
$2v_{\alpha_i}$. Similary, if $\alpha=-\alpha_i$, then both sides 
are equal to $-2v_{\alpha_i}$. Now assume that $\alpha\neq \pm \alpha_i$.

If $\alpha+\alpha_i\not\in\Phi$ and $\alpha-\alpha_i\not\in\Phi$, then 
$e_i(v_\alpha)=f_i(v_\alpha)=0$ and so the left hand side of ($*$) is~$0$.
On the other hand, the right hand side also equals~$0$, since 
$m_i^\pm(\alpha)=1$ and so $(\alpha,\alpha_i^\vee)=m_i^-(\alpha)-
m_i^+(\alpha)=0$.

If $\alpha+\alpha_i\in\Phi$ and $\alpha-\alpha_i\not \in\Phi$, then 
$m_i^-(\alpha)=1$, $f_i(v_\alpha)=0$, $e_i(v_\alpha)=v_{\alpha+\alpha_i}$ 
and $f_i(v_{\alpha+\alpha_i})=m_i^+(\alpha+\alpha_i)v_\alpha$. Furthermore,
$m_i^+(\alpha+\alpha_i)=m_i^+(\alpha)-1$ and so the left hand side of ($*$)
equals $(-m_i^+(\alpha)+1)v_\alpha$. Since $(\alpha,\alpha_i^\vee)=
m_i^-(\alpha)-m_i^+(\alpha)$, this also equals the right hand side of~($*$).

If $\alpha+\alpha_i\not\in\Phi$ and $\alpha-\alpha_i\in\Phi$, then the
argument is completely analogous to the previous case. 
Finally, if $\alpha\pm \alpha_i\in \Phi$, then the left hand side of 
($*$) equals 
\[m_i^+(\alpha)m_i^-(\alpha-\alpha_i)v_\alpha-m_i^-(\alpha)m_i^+
(\alpha+ \alpha_i)v_\alpha.\]
Now $m_i^-(\alpha-\alpha_i)=m_i^-(\alpha)-1$ and $m_i^+(\alpha+\alpha_i)=
m_i^+(\alpha)-1$. Hence, the above expression evaluates to 
$(m_i^-(\alpha)-m_i^+(\alpha))v_\alpha=(\alpha,\alpha_i^\vee)v_\alpha$,
as required. 
\end{proof}

\begin{lem} \label{lem24} We have $e_i\circ f_j=f_j\circ e_i$ for 
all~$i\neq j$.
\end{lem}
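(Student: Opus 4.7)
My plan is to verify the operator identity $e_i \circ f_j = f_j \circ e_i$ by checking it on each element of the distinguished basis of $M$. The key combinatorial input, used throughout, is that the difference $\alpha_i - \alpha_j$ of two distinct simple roots never lies in $\Phi$, which repeatedly forces intermediate applications of $e_i$ or $f_j$ to return zero. For $x = u_k$ ($k \in I$) this is immediate: $f_j(u_k)$ is a scalar multiple of $v_{-\alpha_j}$, and $e_i(v_{-\alpha_j}) = 0$ since $-\alpha_j + \alpha_i \notin \Phi$ and $-\alpha_j \neq -\alpha_i$; the composition $f_j \circ e_i(u_k)$ vanishes symmetrically.

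The main case is $x = v_\alpha$ with $\alpha \in \Phi$ and $\alpha \neq \pm \alpha_i, \pm \alpha_j$. I will split according to which of $\alpha + \alpha_i$, $\alpha - \alpha_j$, and $\alpha + \alpha_i - \alpha_j$ lie in $\Phi$. When all three are roots, each side becomes a scalar multiple of $v_{\alpha + \alpha_i - \alpha_j}$: explicitly, $(e_i \circ f_j)(v_\alpha) = m_j^+(\alpha)\, m_i^-(\alpha - \alpha_j)\, v_{\alpha + \alpha_i - \alpha_j}$ and $(f_j \circ e_i)(v_\alpha) = m_i^-(\alpha)\, m_j^+(\alpha + \alpha_i)\, v_{\alpha + \alpha_i - \alpha_j}$, and the equality of the two scalars is precisely Lemma~\ref{lem3}. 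If exactly one of $\alpha + \alpha_i$, $\alpha - \alpha_j$ lies in $\Phi$, Lemma~\ref{lem2} forces $\alpha + \alpha_i - \alpha_j \notin \Phi$, so the potentially nonzero composition factors through $e_i(v_{\alpha - \alpha_j}) = 0$ or $f_j(v_{\alpha + \alpha_i}) = 0$; the remaining configurations (neither corner in $\Phi$, or three corners in with the fourth absent) produce $0$ on both sides by inspection.

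The boundary cases $\alpha \in \{\pm \alpha_i, \pm \alpha_j\}$, in which the vectors $u_i$ or $u_j$ appear as intermediates, are where I expect the real obstacle to lie, and they presumably correspond to the paper's ``Case~2''. The cases $\alpha = \alpha_i$ and $\alpha = -\alpha_j$ reduce to $0 = 0$. For $\alpha = \alpha_j$, I must match $(e_i \circ f_j)(v_{\alpha_j}) = e_i(u_j) = |(\alpha_i, \alpha_j^\vee)| v_{\alpha_i}$ with $m_i^-(\alpha_j)\, m_j^+(\alpha_j + \alpha_i)\, v_{\alpha_i}$ in the subcase $\alpha_j + \alpha_i \in \Phi$ (otherwise the Cartan integer is zero and both sides vanish). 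Since $\alpha_j - \alpha_i \notin \Phi$ one has $m_i^-(\alpha_j) = 1$, and an analysis of the $\alpha_j$-string through $\alpha_j + \alpha_i$, which terminates below at $\alpha_i$, combined with $(\alpha_j + \alpha_i, \alpha_j^\vee) = m_j^-(\alpha_j + \alpha_i) - m_j^+(\alpha_j + \alpha_i)$, identifies $m_j^+(\alpha_j + \alpha_i)$ with $|(\alpha_i, \alpha_j^\vee)|$. The case $\alpha = -\alpha_i$ is entirely parallel, matching $(f_j \circ e_i)(v_{-\alpha_i}) = f_j(u_i) = |(\alpha_j, \alpha_i^\vee)| v_{-\alpha_j}$ against the corresponding product of root-string multiplicities; the involution $\omega$ of Remark~\ref{lem20} can be invoked if one wishes to reduce this to the previous case.
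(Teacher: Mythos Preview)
Your proof is correct and follows essentially the same approach as the paper: verify on basis vectors, dispose of $u_k$ using $\alpha_i-\alpha_j\notin\Phi$, handle the boundary values $\alpha\in\{\pm\alpha_i,\pm\alpha_j\}$ by direct computation of the root-string integers, and in the generic case invoke Lemma~\ref{lem2} to reduce to the configuration where $\alpha+\alpha_i$, $\alpha-\alpha_j$, $\alpha+\alpha_i-\alpha_j\in\Phi$ and then apply Lemma~\ref{lem3}. The only slip is your guess about the paper's labeling: in the paper, Case~1 is the boundary case $\alpha\in\{\pm\alpha_i,\pm\alpha_j\}$, and Case~2 is the generic case $\alpha\neq\pm\alpha_i,\pm\alpha_j$ (the one invoking Lemma~\ref{lem3}), which the paper's introduction singles out as the most difficult---so the opposite of what you surmised.
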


\begin{proof} For $k\in I$, we have $e_i(f_j(u_k))=|(\alpha_j,\alpha_k^\vee)|
e_i(v_{-\alpha_j})=0$ since $i\neq j$ and $\alpha_i-\alpha_j\not\in\Phi$.
Similarly, we obtain that $f_j(e_i(u_k))=0$. Now let $\alpha\in\Phi$.

\smallskip
\noindent {\em Case 1}. Assume that $\alpha\in\{\pm\alpha_i,\pm\alpha_j\}$. 

If $\alpha=\alpha_i$, then $e_i(v_{\alpha_i})=0$ and $f_j(v_{\alpha_i})=0$ 
(since $\alpha_i-\alpha_j \not\in\Phi$). So both sides of the desired 
identity are equal to~$0$. The same happens for $\alpha=-\alpha_j$.

If $\alpha=\alpha_j$, then $f_j(v_{\alpha_j})=u_j$ and so $e_i(f_j
(v_{\alpha_j}))=|(\alpha_i,\alpha_j^\vee)|v_{\alpha_i}$. If $\alpha_i+
\alpha_j \not\in\Phi$, then $e_i(v_{\alpha_j})=0$ and so $f_j(e_i
(v_{\alpha_j}))=0$; furthermore, $(\alpha_i,\alpha_j)=0$ (see Lemma
~\ref{lem1}) and so $e_i(f_j(v_{\alpha_j}))=0$, as required. If $\alpha_i+ 
\alpha_j \in\Phi$, then 
\[f_j(e_i(v_{\alpha_j}))=m_i^-(\alpha_j)f_j(v_{\alpha_i+\alpha_j})=
m_i^-(\alpha_j)m_j^+(\alpha_i+\alpha_j)v_{\alpha_i}.\]
Now, since $\alpha_i-\alpha_j\not\in\Phi$, we have $m_i^-(\alpha_j)=
m_j^-(\alpha_i)=1$. Furthermore, we have $m_j^+(\alpha_i+\alpha_j)=
m_j^+(\alpha_i)-1=-(\alpha_i,\alpha_j^\vee)=|(\alpha_i,\alpha_j^\vee)|$
since $(\alpha_i,\alpha_j^\vee)\leq 0$. Hence, the desired identity
holds in this case as well.

If $\alpha=-\alpha_i$, then $e_i(v_{-\alpha_i})=u_i$ and so $f_j(e_i
(v_{-\alpha_i}))=|(\alpha_j,\alpha_i^\vee)|v_{-\alpha_j}$. Arguing
as in the previous case, we find the same result for $e_i(f_j
(v_{-\alpha_i}))$. 

\smallskip
\noindent {\em Case 2}. Assume that $\alpha\neq \pm\alpha_i$ and $\alpha\neq 
\pm\alpha_j$. If $\alpha+\alpha_i\not\in \Phi$ and $\alpha-\alpha_j\not
\in\Phi$, then $e_i(v_\alpha)=f_j(v_\alpha)=0$ and so both sides of the 
desired identity are~$0$. The same happens if $\alpha+\alpha_i-\alpha_j
\not\in\Phi$, as one checks immediately. (Note that $\alpha-\alpha_j\neq -
\alpha_i$ and $\alpha+\alpha_i\neq \alpha_j$.) Using Lemma~\ref{lem2}, it 
remains to consider the case where $\alpha+\alpha_i\in\Phi$, $\alpha-
\alpha_j \in\Phi$ and $\alpha+\alpha_i-\alpha_j\in\Phi$. Then we obtain
\begin{align*}
e_i(f_j(v_\alpha)) &=m_i^-(\alpha-\alpha_j)m_j^+(\alpha)v_{\alpha+\alpha_i-
\alpha_j},\\
f_j(e_i(v_\alpha)) &=m_j^+(\alpha+\alpha_i)m_i^-(\alpha)v_{\alpha+\alpha_i-
\alpha_j}.
\end{align*}
Hence, the desired identity holds by Lemma~\ref{lem3}. 
\end{proof}

%%%%%%%%%%%%%%%%%%%%%%%%%%%%%%%%%%%%%%%%%%%%%%%%%%%%%%%%%%%%%%%%%%%%%%%
\section{The Lie algebra generated by $e_i,f_i$} \label{sec3}

We consider the Lie algebra $\gl(M)$ consisting of all linear maps
$M\rightarrow M$ with the usual Lie bracket $[\varphi,\psi]=\varphi
\psi -\psi\varphi$. (We simply write $\varphi\psi$ instead
of $\varphi\circ \psi$ from now on.) For any subset $X\subseteq \gl(M)$,
we denote by  $\langle  X\rangle_{\text{Lie}}\subseteq \gl(M)$ the
Lie subalgebra generated by~$X$. Recall that this subalgebra is spanned
(as a vector space) by the set $\bigcup_{n\geq 1} X_n$, where the
subsets $X_n\subseteq \gl(M)$ are defined inductively by $X_1:=X$ and 
$X_{n+1}:=\{[y,z] \mid y\in X_k, z\in X_{n-k} \mbox{ where $1\leq k
\leq n$}\}$ for all $n\geq 1$. The elements in $X_n$ are called 
{\em Lie monomials} in $X$ (of level $n$).

We now define $\fg:=\langle e_i,f_i \mid i\in I\rangle_{\text{Lie}}
\subseteq \gl(M)$. Clearly, $\dim \fg<\infty$. By Lemma~\ref{lem23},
we have $h_i\in\fg$ for all $i\in I$. Let $\fh\subseteq \fg$ be the 
subspace spanned by $h_i$ ($i\in I$). By Remark~\ref{rem20}, this is an 
abelian subalgebra and the elements $h_i$ ($i\in I$) form a basis 
of~$\fh$. Our aim is to show that $\fg$ is a semisimple Lie algebra with 
Cartan subalgebra $\fh$ and root system isomorphic to $\Phi$. 

\begin{lem} \label{lem41} Let $\fn^+:=\langle e_i \mid i\in I
\rangle_{\operatorname{Lie}}$ and $\fn^-:=\langle f_i \mid i\in I
\rangle_{\operatorname{Lie}}$. Then, for a suitable ordering of the 
basis of $M$, all elements of $\fn^+$ are given by strictly upper triangular
matrices and all elements of $\fn^-$ are given by strictly lower 
triangular matrices. In particular, we have $\fg\subseteq \slm(M)$.
\end{lem}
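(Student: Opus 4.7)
The plan is to introduce a $\Z$-valued ``height'' on the basis of~$M$ so that each $e_i$ strictly raises heights and each $f_i$ strictly lowers them. Concretely, set $\hgt(u_j):=0$ for $j\in I$ and $\hgt(v_\alpha):=\hgt(\alpha)$ for $\alpha\in\Phi$, where $\hgt(\alpha)\in\Z\setminus\{0\}$ denotes the sum of the coefficients of $\alpha$ expressed in the basis~$\Pi$. Order the basis $\{u_j\mid j\in I\}\cup\{v_\alpha\mid\alpha\in\Phi\}$ of $M$ so that basis vectors of strictly larger height come strictly earlier, with ties broken arbitrarily.

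Inspecting the formulae of Section~\ref{sec2}, each $e_i$ sends every basis vector either to zero or to a scalar multiple of a basis vector of height exactly one larger: $e_i(u_j)\propto v_{\alpha_i}$ takes height $0$ to $1$; $e_i(v_\alpha)\propto v_{\alpha+\alpha_i}$ takes height $\hgt(\alpha)$ to $\hgt(\alpha)+1$ whenever $\alpha+\alpha_i\in\Phi$; and the exceptional case $e_i(v_{-\alpha_i})=u_i$ takes height $-1$ to $0$. Hence, in the chosen ordering, the matrix representing each $e_i$ is strictly upper triangular. Since strictly upper triangular matrices form a Lie subalgebra of $\gl(M)$, the same holds for every element of~$\fn^+$. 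An entirely analogous argument---checking that $f_i$ strictly lowers height by $1$ in all cases, including the exceptional $f_i(v_{\alpha_i})=u_i$---shows that every element of~$\fn^-$ is strictly lower triangular.

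For the final assertion, recall that $\fg$ is spanned as a vector space by Lie monomials in $\{e_i,f_i\}$. A level-$1$ monomial is one of the generators $e_i$ or $f_i$, which is strictly triangular and hence has trace zero. A monomial of level~$\geq 2$ is a commutator $[y,z]=yz-zy$ of linear maps, whose trace automatically vanishes. Thus every element of~$\fg$ has trace~$0$, giving $\fg\subseteq\slm(M)$.

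There is no real obstacle here; the only mild subtlety worth flagging is the need to include the $u_j$'s in the height grading (at height $0$), so that the exceptional transitions $v_{-\alpha_i}\mapsto u_i$ and $v_{\alpha_i}\mapsto u_i$ still respect the strict monotonicity by one step.
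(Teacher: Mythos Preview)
Your proof is correct and follows essentially the same approach as the paper: the paper orders the basis as $v_{\beta_N},\ldots,v_{\beta_1},u_{j_1},\ldots,u_{j_l},v_{-\beta_1},\ldots,v_{-\beta_N}$ with the positive roots listed by increasing height, which is precisely an instance of your height-ordering (with a specific tie-break). Your explicit framing via a height grading and your spelled-out argument for $\fg\subseteq\slm(M)$ via traces of commutators make the reasoning slightly more transparent, but the content is the same.
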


\begin{proof} We write $\Phi^+=\{\beta_1,\ldots,\beta_N\}$ such that 
$\hgt(\beta_1)\leq \ldots \leq \hgt(\beta_N)$, where $\hgt(\beta_i)$ 
denotes the usual height of $\beta_i$ (with respect to the set of simple 
roots $\Pi$). Let $l=|I|$ and write $I=\{j_1,\ldots,j_l\}$. Then we order 
the basis elements of $M$ as
\[v_{\beta_N},\;\ldots,\;v_{\beta_1},\;\;u_{j_1},\ldots,u_{j_l},\;\;
v_{-\beta_1},\;\ldots,\; v_{-\beta_N}.\]
The definition in Section~\ref{sec2} immediately shows that the desired
statements hold for $e_i\in\fn^+$ and $f_i\in\fn^-$. Hence, they
also hold for all elements of $\fn^+$ and of $\fn^-$.
\end{proof}

\begin{lem} \label{prop4} Via $\fg\subseteq \gl(M)$, the 
vector space $M$ is a $\fg$-module. Then $M$ is an irreducible $\fg$-module. 
(Recall that $\Phi$ is assumed to be irreducible.)
\end{lem}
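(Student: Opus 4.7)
The plan is to study submodules via the weight decomposition of $M$ under the commuting family $\{h_i\mid i\in I\}$, then use irreducibility of $\Phi$ to propagate through the whole basis. Each $h_i$ acts diagonally on the given basis, so $M$ decomposes into common $\fh$-eigenspaces: $\C v_\alpha$ is one-dimensional with weight $i\mapsto(\alpha,\alpha_i^\vee)$, and $U:=\operatorname{span}(u_j\mid j\in I)$ is the weight-zero subspace. Invertibility of the Cartan matrix (cf.\ Remark~\ref{rem20}) shows that distinct roots produce distinct nonzero weights, so any $\fg$-submodule $N$ splits as $N=\bigoplus_{\alpha\in S}\C v_\alpha\oplus N_0$ for some $S\subseteq\Phi$ and $N_0\subseteq U$.

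Case $S\neq\emptyset$: pick $v_\alpha\in N$. By a standard height induction (any $\beta\in\Phi^+$ of height $>1$ satisfies $(\beta,\alpha_k)>0$ for some $k$ with $c_k>0$, hence $\beta-\alpha_k\in\Phi^+$ by Lemma~\ref{lem1}), combined with connectedness of the Dynkin diagram and the fact that $\alpha_i+\alpha_j\in\Phi$ for adjacent simple roots, the graph on $\Phi^+$ with edges $\gamma\leftrightarrow\gamma\pm\alpha_i$ is connected; similarly for $\Phi^-$. Successive application of the appropriate $e_i$, $f_i$, each contributing a nonzero factor $m_i^\pm\geq 1$, then reaches every $v_\beta$ of the same sign as $\alpha$, in particular some $v_{\alpha_k}\in N$, and hence $u_k=f_kv_{\alpha_k}\in N$. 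Since $e_ju_k=|(\alpha_j,\alpha_k^\vee)|v_{\alpha_j}$ and $f_ju_k=|(\alpha_j,\alpha_k^\vee)|v_{-\alpha_j}$ are nonzero precisely when $j=k$ or $j$ is adjacent to $k$, iterating with Dynkin connectedness produces every $u_j$ and every $v_{\pm\alpha_j}$, and the within-sign connectedness then yields every $v_\beta$; so $N=M$.

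Case $S=\emptyset$, i.e.\ $N\subseteq U$: for $x=\sum_j c_ju_j\in N$, the identity $e_ix=\bigl(\sum_jc_j|(\alpha_i,\alpha_j^\vee)|\bigr)v_{\alpha_i}$ must lie in $N\cap\C v_{\alpha_i}=0$, so $Cc=0$, where $C_{ij}:=|(\alpha_i,\alpha_j^\vee)|$. I plan to show $C$ is invertible: set $D:=\operatorname{diag}((\alpha_i,\alpha_i)/2)$; then $(CD)_{ij}=|(\alpha_i,\alpha_j)|$ is symmetric. Since $\Phi$ is irreducible, its Dynkin diagram is a tree, hence bipartite, so there exist signs $\epsilon_i\in\{\pm 1\}$ with $\epsilon_i\epsilon_j=-1$ whenever $(\alpha_i,\alpha_j)\neq 0$ and $i\neq j$. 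Setting $v:=\sum_i\epsilon_ic_i\alpha_i\in E$ and using that $(\alpha_i,\alpha_j)\leq 0$ off the diagonal, one obtains
\[ c^{\top}(CD)c=\sum_{i,j}c_ic_j|(\alpha_i,\alpha_j)|=(v,v).\]
Positive-definiteness of $(\,,\,)$ together with linear independence of $\Pi$ then forces $c=0$, so $CD$ is positive definite, $C$ is invertible, $N_0=0$, and this case cannot occur. The main obstacle is precisely this invertibility step, since the naive symmetrization $C+C^{\top}$ already fails to be definite in non-simply-laced types (witness $G_2$); the bipartite-sign trick, available exactly because finite Dynkin diagrams are trees, is what makes the argument go through elementarily.
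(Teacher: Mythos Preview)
Your argument is correct and follows the same overall architecture as the paper's proof: split any submodule along the $\fh$-weight decomposition, then treat separately the case where some $v_\alpha$ lies in the submodule and the case where the submodule sits inside the zero-weight space. In the first case your propagation through $\Phi^\pm$ via $e_i,f_i$ and through the $u_j$'s via Dynkin connectedness is essentially the paper's (the paper writes each positive root as a chain of partial sums of simple roots, citing \cite[\S 10.2]{H}, and uses $e_if_i(u_j)=|(\alpha_i,\alpha_j^\vee)|u_i$ to move among the $u_j$'s).

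The one genuine difference is how you handle the zero-weight case. The paper observes that the matrix $A':=\bigl(|(\alpha_j,\alpha_i^\vee)|\bigr)$ equals $4\cdot\mathrm{id}-A$ and then appeals to the classification of indecomposable Cartan matrices to assert $\det(A')=\det(A)\neq 0$. Your bipartite-sign trick---symmetrizing to $CD$ with $(CD)_{ij}=|(\alpha_i,\alpha_j)|$ and then recognising $c^\top(CD)c$ as the Gram value $(v,v)$ for $v=\sum_i\epsilon_ic_i\alpha_i$---is classification-free and arguably more in keeping with the paper's stated goal of an elementary, self-contained treatment. It is also pleasantly consonant with what comes later: the same bipartition $\epsilon\colon I\to\{\pm 1\}$ reappears in Section~\ref{sec4} to pin down the isomorphism $\varphi\colon M\to\fg$. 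So your variant buys a small but real gain in self-containment at no extra cost.
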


\begin{proof} Let $U\subseteq M$ be a $\fg$-submodule such that $U\neq\{0\}$.
We must show that $U=M$. Now every $v_\alpha\in M$ is a simultaneous 
eigenvector for all $h_i\in \fh$, with corresponding eigenvalue $(\alpha,
\alpha_i^\vee)$. Similarly, every $u_j \in M$ is a simultaneous eigenvector 
for all $h_i\in \fh$, with corresponding eigenvalue~$0$. Since $\Phi$ is a 
finite set, it is easy to see that there exists some $h_0\in \fh$ such 
that $h_0(v_\alpha) \neq 0$ and $h_0(v_\alpha) \neq h_0(v_\beta)$ for all 
$\alpha\neq \beta$ in $\Phi$. Now the restriction of $h_0$ to $U$ is also
diagonalisable. Hence, either $U\subseteq \langle u_i\mid i \in I 
\rangle_{\C}$ or there exists some $\alpha\in\Phi$ such that $v_\alpha
\in U$. 

Assume, if possible, that $U\subseteq \langle u_i \mid i \in I 
\rangle_{\C}$. Let $0\neq u\in U$ and write $u=\sum_{i\in I} x_i u_i$ 
where $x_i\in \C$. For $j\in I$, we have $e_j(u)=\sum_{i\in I} x_i
|(\alpha_j,\alpha_i^\vee)| v_{\alpha_j}$. Since $e_j(u)\in U$, we 
conclude that $\sum_{i \in I} x_i|(\alpha_j,\alpha_i^\vee)|=0$ 
for all $j\in I$. Hence, we have $\det(A')=0$ where $A':=\bigl(|(\alpha_j,
\alpha_i^\vee)| \bigr)_{i,j\in I}=4\,\mbox{id}-A$; here, $\mbox{id}$ is
the $I\times I$-identity matrix. But, using the classification of 
indecomposable Cartan matrices, one checks that $\det(A')=\det(A)\neq 0$, 
a contradiction. Hence, we are in the second case, that is, $v_\alpha\in U$
for some $\alpha \in\Phi$. We now proceed as follows to show that $U=M$.

(1) We claim that $u_{i_1}\in U$ for some $i_1\in I$. To see this, assume
first that $\alpha\in\Phi^+$. We can find a sequence $i_1,i_2,\ldots,i_h$ in
$I$ such that $\alpha=\alpha_{i_1}+\ldots+\alpha_{i_h}$ and $\alpha_{i_1}+
\ldots +\alpha_{i_l}\in\Phi$ for $1\leq l\leq h$ (see, e.g., \cite[\S 10.2, 
Corollary]{H}). But then the formulae in Section~\ref{sec2} show that 
$f_{i_1}\cdots f_{i_h}(v_\alpha)$ is a non-zero scalar multiple of 
$u_{i_1}$. Hence, since $v_\alpha\in U$, we also have $u_{i_1}\in U$. The 
argument is similar if $\alpha\in\Phi^-$.

(2) The formulae in Section~\ref{sec2} show that, for any $i,j\in I$, we 
have $e_if_i(u_j)=|(\alpha_i,\alpha_j^\vee)|u_i$. Hence, since $u_{i_1}
\in U$ and the Dynkin diagram of $\Phi$ is connected, we conclude that 
$u_i\in U$ for all $i\in I$.

(3) In order to complete the argument, it now suffices to show that
$v_\beta\in U$ for all $\beta\in \Phi$. To see this, assume first that 
$\beta\in\Phi^+$. As in (1), we can find a sequence $j_1,\ldots,j_k$ in 
$I$ such that $\beta=\alpha_{j_1}+\ldots+\alpha_{j_k}$ and $\alpha_{j_1}+
\ldots + \alpha_{j_m}\in\Phi$ for $1\leq m\leq k$. But then $e_{j_k}
\cdots e_{j_1}(u_{j_1})$ is seen to be a non-zero scalar multiple of 
$v_\beta$. Hence, since $u_{j_1}\in U$ by (2), we also have $v_\beta\in 
U$. The argument is similar if $\beta\in\Phi^-$.
\end{proof}

\begin{prop} \label{prop1} The Lie algebra $\fg$ is semisimple.
\end{prop}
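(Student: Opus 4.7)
The plan is to show that the radical of $\fg$ (the unique maximal solvable ideal) is zero, exploiting the fact that $M$ is a faithful irreducible $\fg$-module by Lemma~\ref{prop4}, together with the trace condition $\fg \subseteq \slm(M)$ from Lemma~\ref{lem41}.

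Let $\mathfrak{r} \subseteq \fg$ denote the radical; as $\fg$ is finite-dimensional we must show $\mathfrak{r}=0$. Since $\mathfrak{r}$ is solvable and $M$ is a finite-dimensional $\C$-vector space, Lie's theorem produces a simultaneous eigenvector $0 \neq v_0 \in M$ for the action of $\mathfrak{r}$; that is, there is a linear form $\lambda \colon \mathfrak{r} \rightarrow \C$ with $x(v_0)=\lambda(x)v_0$ for all $x\in\mathfrak{r}$. Consider the weight space
\[
M_\lambda := \{v\in M \mid x(v)=\lambda(x)v \text{ for all }x\in\mathfrak{r}\}.
\]
The standard invariance lemma (the one used in the proof of Weyl's theorem, proved via a trace argument on the stabilising flag $v,yv,y^2v,\ldots$ for $y\in\fg$) asserts that $\lambda$ vanishes on $[\fg,\mathfrak{r}]$, and hence $M_\lambda$ is a $\fg$-submodule of $M$. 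Since $M_\lambda \neq \{0\}$ and $M$ is irreducible by Lemma~\ref{prop4}, we conclude $M_\lambda = M$.

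Thus every element $x\in\mathfrak{r}$ acts on $M$ as the scalar $\lambda(x)$. By Lemma~\ref{lem41} we have $\fg\subseteq \slm(M)$, so $0=\operatorname{tr}(x)=(\dim M)\lambda(x)$ for all $x\in\mathfrak{r}$. Since $\dim M>0$, this forces $\lambda\equiv 0$, so $\mathfrak{r}$ acts as zero on $M$. But $\fg$ is defined as a subalgebra of $\gl(M)$, so the action on $M$ is faithful, whence $\mathfrak{r}=0$ and $\fg$ is semisimple.

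The only delicate step is the invariance of the weight space $M_\lambda$ under $\fg$, i.e.\ that $\lambda$ annihilates $[\fg,\mathfrak{r}]$; this is precisely the classical lemma underlying Lie's theorem for ideals, so I would simply cite it from a standard reference such as Erdmann--Wildon \cite{EW} rather than reprove it. Everything else is immediate from the results already established in Section~\ref{sec3}.
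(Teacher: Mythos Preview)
Your argument is correct and follows essentially the same route as the paper: both proofs invoke Lemma~\ref{lem41} ($\fg\subseteq\slm(M)$) and Lemma~\ref{prop4} (irreducibility of $M$), and then deduce semisimplicity from the general fact that a subalgebra of $\slm(M)$ acting irreducibly on $M$ is semisimple. The only difference is that the paper simply cites this general fact (to \cite[Exc.~12.4]{EW} or \cite[\S 19.1]{H}), whereas you have written out the standard proof via Lie's theorem and the invariance lemma.
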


\begin{proof} We have $\fg\subseteq \slm(M)$ (see Lemma~\ref{lem41})
and $M$ is an irreducible $\fg$-module (see Lemma~\ref{prop4}). 
Then it follows by a general argument that $\fg$ is semisimple; see, 
e.g., \cite[Exc.~12.4]{EW} (with hints in \cite[\S 20]{EW}) or  
\cite[\S 19.1]{H}. 
\end{proof}

In order to proceed, we need to introduce some further notation. Let $\fh^*
=\mbox{Hom}(\fh,\C)$ be the dual space. For $\alpha\in \Phi$, we define
$\dalp\in\fh^*$ by $\dalp(h_j):=(\alpha,\alpha_j^\vee)$ for all $j\in I$. 
Let $\dPhi:=\{\dalp\mid \alpha \in\Phi\}$. Since the Cartan matrix $A$ is 
invertible, the elements $\dalp_i$ ($i\in I$) form a basis of~$\fh^*$. For 
$\lambda \in\fh^*$, we define
\[\fg_\lambda:=\{x\in\fg\mid [h,x]=\lambda(h)x \mbox{ for all $h\in 
\fh$}\}.\]
If $\lambda=\dalp$, we also write $\fg_\alpha$ instead of $\fg_{\dalp}$.
Now Lemma~\ref{lem22} shows that $[h_j,e_i]=(\alpha_i,\alpha_j^\vee)e_i=
\dalp_i(h_j)e_i$ for all $j\in I$. Hence, $e_i\in \fg_{\dalp_i}=
\fg_{\alpha_i}$; similarly, $f_i\in \fg_{-\dalp_i}=\fg_{-\alpha_i}$. 
Also note that the map $\alpha\mapsto\dalp$ is linear in $\alpha$.

\begin{lem} \label{lem40} We have $\fh\subseteq \fg_0$ and $\fn^\pm 
\subseteq\sum_{\lambda\in Q_\pm} \fg_\lambda$, where we set  
\begin{center}
$Q_+:=\bigl\{\sum_{i\in I} n_i\dalp_i\in \fh^*\mid n_i\in
\Z_{\geq 0} \mbox{ for all $i\in I$}\bigr\}\setminus \{0\}$,
\end{center}
\begin{center}
$Q_-:=\bigl\{\sum_{i\in I} n_i\dalp_i\in \fh^*\mid n_i\in
\Z_{\leq 0} \mbox{ for all $i\in I$}\bigr\}\setminus \{0\}.$
\end{center}
\end{lem}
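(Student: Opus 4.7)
The plan is to split the claim into the two assertions and handle both by very standard arguments based on the Jacobi identity, using only what has already been established.

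For the first assertion $\fh\subseteq\fg_0$, I would simply observe that $\fh$ is abelian (noted after the definition of $\fh$, via Remark~\ref{rem20}), so for any $h\in\fh$ and any $h'\in\fh$ we have $[h',h]=0=0\cdot h$, i.e.\ $h\in\fg_0$.

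For the inclusion $\fn^+\subseteq\sum_{\lambda\in Q_+}\fg_\lambda$, the key ingredient is the standard bracket-grading property: for any $\lambda,\mu\in\fh^*$,
\[ [\fg_\lambda,\fg_\mu]\subseteq \fg_{\lambda+\mu}. \]
This follows from the Jacobi identity, since for $x\in\fg_\lambda$, $y\in\fg_\mu$ and $h\in\fh$,
\[ [h,[x,y]]=[[h,x],y]+[x,[h,y]]=\lambda(h)[x,y]+\mu(h)[x,y]=(\lambda+\mu)(h)[x,y]. \]
Recall from the discussion preceding the lemma that Lemma~\ref{lem22} gives $e_i\in\fg_{\dalp_i}$, and $\dalp_i\in Q_+$ for each $i\in I$. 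Now $\fn^+$ is spanned by the Lie monomials $X_n$ in $\{e_i\mid i\in I\}$ (with notation as in the opening paragraph of Section~\ref{sec3}). I would prove by induction on $n\geq 1$ that every element of $X_n$ lies in some $\fg_\lambda$ with $\lambda=\sum_{i\in I}n_i\dalp_i$, $n_i\in\Z_{\geq 0}$ and $\sum_{i\in I}n_i=n\geq 1$, hence in $\sum_{\lambda\in Q_+}\fg_\lambda$. The base case $n=1$ is immediate. For the inductive step, a Lie monomial of level $n+1$ has the form $[y,z]$ with $y\in X_k$, $z\in X_{n-k+1}$, and by the inductive hypothesis together with the grading property just recorded, $[y,z]$ lies in $\fg_{\lambda+\mu}$ with $\lambda+\mu\in Q_+$.

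The inclusion $\fn^-\subseteq\sum_{\lambda\in Q_-}\fg_\lambda$ is proved in exactly the same way, using $f_i\in\fg_{-\dalp_i}$ and $-\dalp_i\in Q_-$; alternatively one could invoke the involution $\omega$ from Remark~\ref{lem20} to transfer the $\fn^+$-statement. There is no real obstacle here; the only thing to be careful about is keeping track of the exclusion of~$0$ from $Q_\pm$, which is why I insist on $n\geq 1$ in the induction (this uses that the Lie subalgebras $\fn^\pm$ are defined to be spanned by Lie monomials in the $e_i$ respectively~$f_i$, so every generator has strictly positive height in the $\dalp_i$).
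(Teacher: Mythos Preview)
Your proposal is correct and follows essentially the same route as the paper's proof: the paper also observes $\fh\subseteq\fg_0$ from abelianness, and then inducts on the level of Lie monomials in the $e_i$, using the Jacobi identity to show $[\fg_\mu,\fg_\nu]\subseteq\fg_{\mu+\nu}$ so that each Lie monomial lies in some $\fg_\lambda$ with $\lambda\in Q_+$ (and analogously for $\fn^-$). Your explicit tracking of $\sum_i n_i=n\geq 1$ to guarantee $\lambda\neq 0$ is a nice touch that the paper leaves implicit.
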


\begin{proof} This is analogous to step~(5) of the proof of 
\cite[Theorem~18.2]{H}. Since $\fh$ is abelian, $\fh\subseteq \fg_0$. Now 
let $x\in \fn^+$ be a Lie monomial in $\{e_i\mid i\in I\}$ of level~$n$. We 
show by induction on $n$ that $x\in \fg_\lambda$ for some $\lambda\in Q_+$.
If $n=1$, then $x=e_i$ for some $i$ and we already noted that $e_i\in 
\fg_{\dalp_i}$. If $n\geq 2$, then $x=[y, z]$ where $y,z$ are Lie 
monomials of level $k$ and $n-k$, respectively. By induction, $y\in 
\fg_\mu$ and $z\in\fg_\nu$ where $\lambda, \nu \in Q_+$. A computation
using the Jacobi identity shows that $[h,x]=[h,[y,z]]=(\mu+ \nu)(h)x$ for 
all $h\in\fh$. Hence, $x\in \fg_\lambda$ where $\lambda:=\mu+\nu\in Q_+$. 
The argument for $\fn^-$ is completely analogous.
\end{proof}

\begin{lem} \label{lem42} We have a direct sum decomposition $\fg=
\fn^-\oplus \fh\oplus \fn^+$. 
\end{lem}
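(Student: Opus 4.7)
My plan is to separate the statement into two claims: that the sum $\fn^- + \fh + \fn^+$ is direct, and that it exhausts $\fg$.

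\textbf{Directness.} This is essentially free from Lemma~\ref{lem40}. The subsets $Q_-$, $\{0\}$, $Q_+$ of $\fh^*$ are pairwise disjoint, and the weight spaces $\fg_\lambda$ for distinct $\lambda \in \fh^*$ are in direct sum in $\fg$ (since they are honest eigenspaces for the commuting family $\operatorname{ad}(h)$, $h \in \fh$). So a relation $x + y + z = 0$ with $x \in \fn^-$, $y \in \fh$, $z \in \fn^+$ decomposes into weight components supported in disjoint subsets of $\fh^*$, forcing $x = y = z = 0$.

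\textbf{Equality.} Set $\fg' := \fn^- + \fh + \fn^+$. Then $\fg' \subseteq \fg$, and $\fg'$ contains all generators $e_i, f_i$ of $\fg$. So it suffices to show $\fg'$ is closed under the Lie bracket. Expanding $[\fg', \fg']$, the only non-obvious pieces are
\begin{align*}
\text{(A)} &\quad [\fh, \fn^\pm] \subseteq \fn^\pm, \\
\text{(B)} &\quad [\fn^+, \fn^-] \subseteq \fn^- + \fh + \fn^+;
\end{align*}
the remaining brackets are trivial ($[\fh,\fh]=0$) or immediate from $\fn^\pm$ being Lie subalgebras.

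\textbf{Proof of (A).} I would fix $h \in \fh$ and run induction on the level $n$ of a Lie monomial $x$ in the $e_i$. For $n=1$, $x=e_i$ and $[h,e_i] = \dalp_i(h) e_i \in \fn^+$ by Lemma~\ref{lem22}(a). For $n \geq 2$, write $x = [y,z]$ with $y, z$ Lie monomials of lower level; the Jacobi identity gives $[h,[y,z]] = [[h,y],z] + [y,[h,z]]$, and both summands lie in $\fn^+$ by induction together with closure of $\fn^+$ under brackets. The case of $\fn^-$ is symmetric.

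\textbf{Proof of (B).} This is the real content. I would use induction on the sum $m+n$ of the levels of Lie monomials $x \in \fn^+$ (in the $e_i$) and $y \in \fn^-$ (in the $f_j$). The base case $m=n=1$ is $[e_i, f_j]$, which equals $h_i \in \fh$ if $i=j$ by Lemma~\ref{lem23} and $0$ if $i \neq j$ by Lemma~\ref{lem24}. For the inductive step, assume $m \geq 2$ (the case $n \geq 2$ is symmetric) and write $x = [x_1, x_2]$ with $x_1, x_2$ Lie monomials in the $e_i$ of strictly smaller level. Apply Jacobi:
\[
[x, y] = [x_1, [x_2, y]] - [x_2, [x_1, y]].
\]
By the inductive hypothesis, each $[x_k, y]$ lies in $\fn^- + \fh + \fn^+$. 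I then bracket $x_k \in \fn^+$ against each of the three summands: against $\fn^+$ the result is in $\fn^+$ (closure); against $\fh$ it is in $\fn^+$ by (A); against $\fn^-$, the result lies in $\fn^- + \fh + \fn^+$ by the inductive hypothesis, since $x_k$ has level strictly less than $m$ while the $\fn^-$-argument has level $\leq n$. Combining, $[x_1, [x_2, y]], [x_2, [x_1, y]] \in \fn^- + \fh + \fn^+$, closing the induction.

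The main obstacle is the bookkeeping in (B): one must be careful that when rewriting $[x,y]$ via Jacobi one only ever calls the inductive hypothesis at strictly smaller total level, and one must use (A) to handle the $\fh$-piece that emerges in the middle step. The rest of the argument is routine.
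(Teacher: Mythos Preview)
Your overall strategy---show $\fg' = \fn^- + \fh + \fn^+$ is a subalgebra containing the generators---is exactly the paper's, and your directness argument via weight spaces (Lemma~\ref{lem40}) is a valid alternative to the paper's use of triangular matrices (Lemma~\ref{lem41}).

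However, there is a gap in step~(B). Your induction is on the total level $m+n$, and in the inductive step you decompose $[x_2,y] = a + b + c$ with $a \in \fn^-$, then assert that ``the $\fn^-$-argument has level $\leq n$'' so that the inductive hypothesis applies to $[x_1,a]$. But your inductive hypothesis, as stated, gives no control whatsoever on the levels of Lie monomials needed to express $a$: it only says $[x_2,y] \in \fn^- + \fh + \fn^+$. The element $a$ is not a Lie monomial, and a priori could require monomials of arbitrarily high level to express. You can repair this either by strengthening the inductive statement to track level bounds on the three components, or by invoking the weight grading from Lemma~\ref{lem40}: since $x_2 \in \fg_\mu$ and $y \in \fg_\nu$ with $\mu,\nu$ of heights $m-k$ and $-n$, the bracket $[x_2,y]$ lies in the single weight space $\fg_{\mu+\nu}$, so its $\fn^-$-component (if nonzero) is a combination of Lie monomials of level exactly $n-(m-k) < n$.

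The paper avoids this bookkeeping entirely by a sharper one-sided induction: it first proves the stronger inclusion $[e_i,\fn^-] \subseteq \fn^- + \fh$ (no $\fn^+$ term), inducting only on the level of the $\fn^-$-argument. In the inductive step $x = [y,z]$ with $y,z \in \fn^-$, Jacobi gives $[e_i,x]$ in terms of $[y, \fn^- + \fh]$ and $[z, \fn^- + \fh]$; since $[y,\fh] \subseteq \C y \subseteq \fn^-$ and $[y,\fn^-] \subseteq \fn^-$, everything lands back in $\fn^-$ with no level-tracking needed. From $[e_i,V] \subseteq V$ and $[f_i,V] \subseteq V$ (where $V = \fg'$) one then gets $[\fg,V] \subseteq V$ by a trivial second induction on Lie monomials in $\{e_i,f_i\}$. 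Keeping one factor at generator level is what makes the paper's argument cleaner than your double induction.
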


\begin{proof} This is analogous to steps~(6), (7), (8) of the proof of 
\cite[Theorem~18.2]{H}. The crucial property to show is that $[e_i,\fn^-]
\subseteq \fn^-+\fh$ for all $i\in I$. This is done as follows. 
Let $x\in\fn^-$ be a Lie monomial in $\{f_j\mid j\in I\}$ of level~$n$.
If $n=1$, then $x=f_j$ for some~$j$ and so $[e_i,f_j]\in\fh$ by 
Lemmas~\ref{lem23} and~\ref{lem24}. If $n\geq 2$, then $x=[y,z]$ where
$y,z$ are Lie monomials of level $k$ and $n-k$, respectively. Using 
induction and the Jacobi identity, we obtain $[e_i,x]=[y,[e_i,z]]+
[z,[e_i,y]]\in [y,\fh]+[y,\fn^-]+[z,\fh]+[z,\fn^-]$. Clearly, $[y,\fn^-]
\subseteq \fn^-$ and $[z,\fn^-]\subseteq \fn^-$; furthermore, by
Lemma~\ref{lem40} (and its proof), we have $[y,\fh]\subseteq \C y$ and 
$[z,\fh]\subseteq \C z$. Hence, $[e_i,x]\in\fn^-$, as required. By a 
completely analogous argument, one shows that $[f_i,\fn^+] \subseteq 
\fn^++\fh$ for all $i\in I$. Hence, setting $V:=\fn^-+\fh+\fn^+\subseteq 
\fg$, we see that $[e_i,V]\subseteq V$ and $[f_i,V]\subseteq V$ for all 
$i\in I$. By a further induction on the level of Lie monomials, this 
implies that $[V,V]\subseteq V$ and so $V$ is a subalgebra of $L$. Since 
$e_i,f_i\in V$, we conclude that $V=\fg$. Finally, directness of the sum 
$V=\fn^-+\fh+ \fn^+$ follows from Lemma~\ref{lem41} and the fact that all 
elements of $\fh$ are given by diagonal matrices.
\end{proof}

\begin{thm} \label{fincor} Recall that $\Phi$ is assumed to be irreducible. 
Then the Lie algebra $\fg=\langle e_i,f_j\mid i,j\in I 
\rangle_{\operatorname{Lie}} \subseteq \gl(M)$ is simple, $\fh$ is a Cartan 
subalgebra of $\fg$ and $\dPhi\cong \Phi$ is the root system of $\fg$
with respect to $\fh$. In particular, we have a direct sum decomposition 
$\fg =\fh \oplus \bigoplus_{\alpha \in\Phi} \fg_{\alpha}$ where $\dim 
\fg_{\alpha}=1$ for $\alpha\in\Phi$.
\end{thm}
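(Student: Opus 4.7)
The plan rests on three results already established: $\fg$ is semisimple (Proposition~\ref{prop1}), $\fg=\fn^-\oplus\fh\oplus\fn^+$ (Lemma~\ref{lem42}), and the weights of $\fn^\pm$ lie in $Q_\pm$ (Lemma~\ref{lem40}). The strategy is to exhibit $\fh$ as a Cartan subalgebra, identify the root system of $\fg$ relative to $\fh$ with $\dPhi$, and then deduce simplicity from the irreducibility of $\Phi$.

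First, I would obtain a weight-space decomposition of $\fg$ under the adjoint action of $\fh$. Writing $\operatorname{ad}(h)=L_h-R_h$ on $\gl(M)$ and using that each $h_i$ is diagonal on $M$, the family $\{\operatorname{ad}(h_i)\}_{i\in I}$ consists of commuting, simultaneously diagonalisable endomorphisms of $\gl(M)$, and hence of the $\operatorname{ad}$-stable subspace $\fg$. Thus $\fg=\bigoplus_{\mu\in\fh^*}\fg_\mu$, and Lemmas~\ref{lem40} and~\ref{lem42} force every nonzero weight to lie in $Q_+\cup Q_-$ and give $\fn^\pm\cap\fg_0=0$, so $\fg_0=\fh$. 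Consequently $C_\fg(\fh)=\fg_0=\fh$. Since $\fh$ is toral (abelian and $\operatorname{ad}$-diagonalisable) and self-centralising in the semisimple $\fg$, it is a Cartan subalgebra (see Humphreys~\cite[\S15.3, \S16.2]{H} or Erdmann--Wildon~\cite[\S10, \S14]{EW}). The general theory of semisimple Lie algebras then yields a root system $\Phi':=\{\mu\in\fh^*\setminus\{0\}\mid\fg_\mu\neq 0\}$ in $\fh^*$ with $\dim\fg_\mu=1$ for all $\mu\in\Phi'$, and a root space decomposition $\fg=\fh\oplus\bigoplus_{\mu\in\Phi'}\fg_\mu$.

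Next, I would identify $\Phi'$ with $\dPhi$. Since $e_i\in\fg_{\dalp_i}$ and $f_i\in\fg_{-\dalp_i}$ are both nonzero, $\pm\dalp_i\in\Phi'$. Because $\dPi=\{\dalp_i\mid i\in I\}$ is a basis of $\fh^*$ (the Cartan matrix $A$ being invertible) and $\Phi'\subseteq Q_+\cup Q_-$ by construction, the set $\dPi$ qualifies as a system of simple roots for $\Phi'$. The Cartan matrix of $\Phi'$ relative to $\dPi$ is $\bigl(\dalp_j(h_i)\bigr)_{i,j\in I}=\bigl((\alpha_j,\alpha_i^\vee)\bigr)_{i,j\in I}=A$, coinciding with that of $\Phi$ relative to $\Pi$. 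By the standard uniqueness of crystallographic root systems with a prescribed Cartan matrix, the linear map $\alpha_i\mapsto\dalp_i$ extends to an isomorphism $\Phi\cong\Phi'$, so $\Phi'=\dPhi$.

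Finally, simplicity of $\fg$ follows from the irreducibility of $\Phi$: the ideals of a semisimple Lie algebra correspond to unions of connected components of its Dynkin diagram (equivalently, to $W$-stable closed root subsystems), and an irreducible root system admits only the trivial and full subsystems of this kind. The main obstacle I anticipate is the identification in the second step: one must verify that $\dPi$ really is a set of simple roots of $\Phi'$ (which follows from the basis property of $\dPi$ and the $Q_\pm$-split imposed on $\Phi'$), and then invoke the classification-free fact that two crystallographic root systems sharing a common Cartan matrix are isomorphic.
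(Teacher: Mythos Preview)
Your proposal is correct and follows essentially the same route as the paper's proof: use Proposition~\ref{prop1}, Lemma~\ref{lem40} and Lemma~\ref{lem42} to show $\fg_0=\fh$ is a Cartan subalgebra, observe that $\dPi\subseteq\Phi'\subseteq Q_+\cup Q_-$ forces $\dPi$ to be a base of $\Phi'$, compute that the Cartan matrix of $\Phi'$ relative to $\dPi$ equals $A$ (here you are implicitly using $[e_i,f_i]=h_i$ from Lemma~\ref{lem23} to identify $h_i$ as the coroot of $\dalp_i$, which is exactly what the paper makes explicit), and conclude $\Phi'=\dPhi$ and hence simplicity. The only difference is that you spell out the simultaneous diagonalisability argument for the weight decomposition a bit more explicitly than the paper does.
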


\begin{proof} This is now a matter of putting the above pieces together.
By Proposition~\ref{prop1}, $\fg$ is semisimple. By
Lemma~\ref{lem40}, $\fh\subseteq \fg_0$ and $\fn^\pm 
\subseteq\bigoplus_{\lambda\in Q_\pm} \fg_\lambda$. Using also 
Lemma~\ref{lem42}, we deduce that $\fh=\fg_0$ and $\fn^\pm=\sum_{\lambda 
\in Q_\pm} \fg_\lambda$, which in turn implies that $\fh$ is a 
Cartan subalgebra of~$\fg$. 

Let $\Phi'$ be the root system of $\fg$ with 
respect to $\fh$. Then $\Phi'\subseteq Q_+\cup Q_-$. Since $\dalp_i\in
\Phi'$ for all $i\in I$, we deduce that $\dPi:=\{\dalp_i\mid i\in I\}$ is 
a set of simple roots for~$\Phi'$. By Lemmas~\ref{lem22} and \ref{lem23}, 
we have $[e_i,f_i]=h_i$ and $[h_j,e_i]=(\alpha_i,\alpha_j^\vee)e_i$ for 
all $i,j$. Hence, the Cartan matrix $A$ is also the Cartan matrix of 
$\Phi'$ with respect to $\dPi$. But every root system is uniquely determined
by a set of simple roots and the corresponding Cartan matrix. Hence, 
$\Phi'=\dPhi$. Finally, $\fg$ is simple since $\dPhi\cong \Phi$ is 
irreducible. The statements about the direct sum decomposition of $\fg$ 
are classical facts about semisimple Lie algebras; see, e.g., 
\cite[Chap.~10]{EW}, \cite[\S 8.4]{H}.
\end{proof}

\begin{rem} \label{coroots} For any $\alpha\in\Phi$ we define a linear 
map $h_\alpha\colon M\rightarrow M$ by $h_\alpha(u_j):=0$ for $j\in I$ 
and $h_\alpha(v_\beta):=(\beta,\alpha^\vee)v_\beta$ for $\beta\in \Phi$. 
(If $\alpha=\alpha_i$ for some $i\in I$, then this agrees with the 
definition of $h_i$.) Now, we can certainly write $\alpha^\vee=
\sum_{i \in I} x_i \alpha_i^\vee$ where $x_i\in \C$. Then $h_\alpha
(v_{\beta})=(\beta,\alpha^\vee)v_\beta=\sum_{i\in I} x_i(\beta,
\alpha_i^\vee)v_\beta=\sum_{i\in I}x_i h_i(v_{\beta})$ for all $\beta
\in\Phi$. We conclude that $h_\alpha=\sum_{i \in I}x_ih_i\in \fh$ 
and $\dbet(h_\alpha)=(\beta,\alpha^\vee)\in \Z$ for all $\beta \in\Phi$.
Thus, the elements $\{h_\alpha\mid \alpha\in\Phi\}\subseteq \fh$ are 
the ``co-roots'' of $\fg$.
\end{rem}

\begin{rem} \label{rem200a} Let $\omega\colon M\rightarrow M$ be as
in Remark~\ref{lem20}. Then conjugation with $\omega$ preserves $\fg$ and
so we obtain a Lie algebra automorphism $\tilde{\omega}\colon \fg
\rightarrow \fg$ such that 
\[\tilde{\omega}^2=\mbox{id}_{\fg}, \qquad  \tilde{\omega}(e_i)=f_i
\quad \mbox{and}\quad \tilde{\omega}(h_i)=-h_i\quad \mbox{for all $i\in I$}.\]
Consequently, we have $\tilde{\omega}(\fh)=\fh$ and $\tilde{\omega}
(\fg_\alpha)=\fg_{-\alpha}$ for all $\alpha\in\Phi$. 
\end{rem}

\begin{rem} \label{remadj} Let $\alpha_0\in \Phi$ be the unique root of 
maximal height (with respect to~$\Pi$). Then the formulae in 
Section~\ref{sec2} show that $v_{\alpha_0}\in M$ is a primitive vector, 
with corresponding weight~$\dalp_0$. Hence, by the general theory of 
highest weight modules (see \cite[Chap.~VI]{H}, \cite[Chap.~VII]{S}), 
the simple $\fg$-module $M$ is isomorphic to $\fg$ (viewed as a $\fg$-module 
via the adjoint representation). If one does not want to use these general
results, then one has to show directly that $\fg$ admits a basis on
which $e_i,f_i$ act via the formulae in Section~\ref{sec2}, following
the argument in \cite{L1}, \cite{L2}.
\end{rem}

\begin{rema} {\bf Chevalley groups.} \label{cheva} Following Lusztig 
\cite{L4}, \cite[\S 2]{L5}, we obtain the Chevalley groups corresponding
to $\fg\subseteq \gl(M)$ as follows. By Lemma~\ref{lem41}, every $x\in 
\fn^\pm$ is a nilpotent linear map. Hence, we can define $\exp(x)\in 
\GL(M)$; note that $\exp(x) \exp(-x)=\mbox{id}_M$. In particular, we can 
define $x_i(t):=\exp(t e_i)\in \GL(M)$ and $y_i(t):=\exp(t f_i)\in \GL(M)$ 
for all $i\in I$, $t\in\C$. Explicitly, we have: 
\begin{gather*}
x_i(t)(u_j) = u_j+|(\alpha_i,\alpha_j^\vee)|tv_{\alpha_i},\qquad
x_i(t)(v_{-\alpha_i}) =v_{-\alpha_i}+tu_i+t^2v_{\alpha_i},\\
x_i(t)(v_{\alpha_i}) =v_{\alpha_i}, \qquad x_i(t)(v_{\alpha}) =
\sum_{k\geq 0,\, \alpha+k\alpha_i\in\Phi} \binom{k+m_i^-(\alpha)-1}{k}
t^kv_{\alpha+ k\alpha_i},\\
y_i(t)(u_j) = u_j+|(\alpha_i,\alpha_j^\vee)|tv_{-\alpha_i},\qquad
y_i(t)(v_{\alpha_i}) =v_{\alpha_i}+tu_i+t^2v_{-\alpha_i},\\
y_i(t)(v_{-\alpha_i}) =v_{-\alpha_i}, \qquad y_i(t)(v_{\alpha}) =
\sum_{k\geq 0,\,\alpha-k\alpha_i\in\Phi} \binom{k+m_i^+(\alpha)-1}{k}
t^kv_{\alpha-k\alpha_i},
\end{gather*}
where $j\in I$ and $\alpha\in \Phi$, $\alpha\neq \pm \alpha_i$. (Compare
with the formulae in \cite[\S 4.3]{Ca1}, \cite[p.~24]{Ch}.) Now let $R$ be 
any commutative ring with $1$ and $\bar{M}$ be a free $R$-module with a 
basis $\{\bar{u}_i \mid i\in I\} \cup \{\bar{v}_\alpha \mid \alpha\in
\Phi\}$. Using a specialisation argument as in \cite{Ch}, we can 
then define $\bar{x}_i(t)\in \GL(\bar{M})$ and $\bar{y}_i(t)\in 
\GL(\bar{M})$ for all $i\in I$, $t\in R$. (See also \cite[\S 4.4]{Ca1}.) 
The corresponding Chevalley group (of adjoint type) is defined as 
\[G_R:=\langle \bar{x}_i(t), \bar{y}_i(t)\mid i \in I, t\in R \rangle
\subseteq \GL(\bar{M}).\]
In this way, we obtain a canonical procedure $R\leadsto G_R$, 
which does not involve the choice of certain signs as in Chevalley's
original approach \cite{Ch}. 
\end{rema}

%%%%%%%%%%%%%%%%%%%%%%%%%%%%%%%%%%%%%%%%%%%%%%%%%%%%%%%%%%%%%%%%%%%%%%%
\section{Chevalley bases} \label{sec4}

Let $\fg\subseteq \gl(M)$ be as in the previous section. In order to 
establish further structural properties of the corresponding groups $G_R$ 
(e.g., Chevalley's commutator relations), one needs to define ``integral'' 
elements $\be_\alpha \in \fg_{\alpha}$ for all $\alpha\in \Phi$. 
For this purpose, recall from Remark~\ref{remadj} that we have an 
isomorphism of $\fg$-modules $M\cong \fg$, where $\fg$ is viewed as a 
$\fg$-module via the adjoint representation. Since $M$ is irreducible, 
such an isomorphism is unique up to multiplication by a scalar. The first
step now is to see how we can fix a specific isomorphism $M\cong \fg$. 

Since the Dynkin diagram of $\Phi$ has no loops, there are exactly two 
functions $\epsilon\colon I\rightarrow \{\pm 1\}$ such that $\epsilon(i)
=-\epsilon(j)$ whenever $a_{ij}\neq 0$ for $i\neq j$ in $I$; if $\epsilon$ 
is one of these two functions, then the other one is $-\epsilon$. The 
following result is due to Lusztig (unpublished); essentially the same
statement appears in Rietsch \cite[4.1]{KR}. (I~thank Lusztig for
pointing out this reference to me.)

\begin{lem} \label{remadj0} Let us fix a function $\epsilon\colon I
\rightarrow \{\pm 1\}$ as above. Then there is a unique $\fg$-module
isomorphism $\varphi \colon M\rightarrow \fg$ such that, for all
$i\in I$, we have 
\[\varphi(v_{\alpha_i})=\epsilon(i)e_i, \quad \varphi(v_{-\alpha_i})=-
\epsilon(i)f_i,\quad\varphi(u_i)=-\epsilon(i) h_i.\]
\end{lem}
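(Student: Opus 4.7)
The plan is to obtain $\varphi$ by rescaling any abstract $\fg$-module isomorphism $M \cong \fg$ (provided by Remark~\ref{remadj}) so that its value at one $v_{\alpha_{i_0}}$ matches the prescription, and then to show that the three listed identities are forced for all $i \in I$. Both existence of such a rescaling and its uniqueness will follow from Schur's lemma applied to the irreducible $\fg$-module $M$ (Lemma~\ref{prop4}).

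Fix any $\fg$-module isomorphism $\varphi_0 \colon M \to \fg$. Because $v_{\alpha_i}$ spans the $\dalp_i$-weight space of $M$ and $\fg_{\alpha_i} = \C e_i$ (Theorem~\ref{fincor}), we have $\varphi_0(v_{\alpha_i}) = c_i e_i$ for some $c_i \in \C^\times$. The key observation concerns adjacent nodes: if $i \neq j$ and $a_{ij} \neq 0$, then $\alpha_i + \alpha_j \in \Phi$ while $\alpha_i - \alpha_j \notin \Phi$, so $m_j^-(\alpha_i) = m_i^-(\alpha_j) = 1$, and the formulae of Section~\ref{sec2} give $e_j(v_{\alpha_i}) = v_{\alpha_i+\alpha_j} = e_i(v_{\alpha_j})$. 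Applying the intertwining $\varphi_0$ yields $c_i[e_j, e_i] = -c_j[e_j, e_i]$ inside $\fg_{\alpha_i + \alpha_j}$, which is one-dimensional and spanned by $[e_j, e_i]$ (since $\fn^+$ is Lie-generated by the $e_k$ and this weight space is nonzero). Hence $c_i = -c_j$ whenever $i, j$ are adjacent.

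Now pick any $i_0 \in I$ and set $\varphi := (\epsilon(i_0)/c_{i_0})\,\varphi_0$, so that $\varphi(v_{\alpha_{i_0}}) = \epsilon(i_0)\,e_{i_0}$. Writing $\varphi(v_{\alpha_i}) = c'_i e_i$, the propagation identity $c'_i = -c'_j$ for adjacent $i, j$, together with connectedness of the Dynkin diagram and the defining property of $\epsilon$, forces $c'_i = \epsilon(i)$ for all $i \in I$. The other two identities are then automatic: from $u_i = f_i(v_{\alpha_i})$ we get $\varphi(u_i) = [f_i, \epsilon(i) e_i] = -\epsilon(i)\,h_i$ by Lemma~\ref{lem23}, and from $2 v_{-\alpha_i} = f_i(u_i)$ we get $\varphi(v_{-\alpha_i}) = \tfrac{1}{2}[f_i, -\epsilon(i) h_i] = -\epsilon(i)\,f_i$ using $[h_i, f_i] = -2 f_i$ from Lemma~\ref{lem22}(b).

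Uniqueness follows from Schur: any second such $\varphi'$ equals $\lambda \varphi$ for some $\lambda \in \C^\times$, and matching $\varphi'(v_{\alpha_{i_0}}) = \epsilon(i_0) e_{i_0}$ forces $\lambda = 1$. The main technical point in this plan is the sign-propagation claim in the second paragraph; it hinges on $[e_j, e_i]$ being a nonzero element of the one-dimensional space $\fg_{\alpha_i+\alpha_j}$, which is ensured by $\fn^+$ being Lie-generated by the simple $e_k$ together with the root-space decomposition in Theorem~\ref{fincor}.
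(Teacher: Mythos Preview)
Your proof is correct and follows essentially the same strategy as the paper's: start from an arbitrary $\fg$-module isomorphism, write $\varphi_0(v_{\alpha_i})=c_ie_i$ via weight considerations, establish $c_i=-c_j$ for adjacent $i,j$, and rescale. The only cosmetic difference is in the sign-propagation step---the paper evaluates $\varphi$ on $e_i(u_j)$ (landing in $\C e_i$) rather than comparing $e_j(v_{\alpha_i})$ with $e_i(v_{\alpha_j})$; note incidentally that your needed fact $[e_i,e_j]\neq 0$ already follows from $c_i[e_j,e_i]=\varphi_0(v_{\alpha_i+\alpha_j})\neq 0$ by injectivity of $\varphi_0$, so the appeal to Theorem~\ref{fincor} there is not required.
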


\begin{proof} We know that there exists some $\fg$-module isomorphism
$\varphi\colon M\rightarrow \fg$. Let $i\in I$. Since $[h_j,\varphi
(v_{\alpha_i})]=\varphi(h_j(v_{\alpha_i}))=(\alpha_i,\alpha_j^\vee)
\varphi(v_{\alpha_i})$ for all $j\in I$, it is clear that $\varphi
(v_{\alpha_i})\in\fg_{\alpha_i}$ and, hence, $\varphi(v_{\alpha_i})=
c_ie_i$ where $0\neq c_i\in \C$. This then implies that $\varphi(u_i)=
\varphi(f_i(v_{\alpha_i}))=[f_i,\varphi(v_{\alpha_i})]=c_i[f_i,e_i]=
-c_ih_i$. Similarly, we have $\varphi(v_{-\alpha_i})\in \fg_{-\alpha_i}$
and $e_i(v_{-\alpha_i})=u_i$, which implies that $\varphi(v_{-\alpha_i})=
-c_if_i$. 

Now assume that $i\neq j$ in $I$ are such that $a_{ij}\neq 0$; note that
$a_{ij}<0$. Then $\varphi(e_i(u_j))=|a_{ji}|\varphi(v_{\alpha_i})=-a_{ji}
c_ie_i$ and $[e_i,\varphi(u_j)]=-c_j[e_i,h_j]=c_ja_{ji}e_i$. Since these two 
expressions are equal and non-zero, we must have $c_i=-c_j$. We conclude
that all $c_i$ have the same value up to sign; furthermore, $c_i,c_j$ have 
opposite signs whenever $i\neq j$ in $I$ are such that $a_{ij}\neq 0$. 
Consequently, there is some $0\neq c \in \C$ such that $c_i=c\epsilon(i)$
for all $i\in I$. It remains to replace $\varphi$ by $c^{-1}\varphi$.
\end{proof}

\begin{defn} \label{chbase0} Let $\epsilon$ and $\varphi\colon M
\rightarrow \fg\subseteq \gl(M)$ be as in Lemma~\ref{remadj0}. Then we set 
\[\be_\alpha^\epsilon:=\varphi(v_\alpha) \qquad \mbox{for all 
$\alpha \in\Phi$}.\]
In particular, $\be_{\alpha_i}^\epsilon=\epsilon(i)e_i$ and 
$\be_{-\alpha_i}^\epsilon=-\epsilon(i)f_i$ for all $i\in I$. Since each 
$v_\alpha$ belongs to the $\dalp$-weight space of $M$, it is clear that 
$\be_\alpha^\epsilon\in \fg_\alpha$. Hence, 
\[B^\epsilon:=\{h_i\mid i\in I\}\cup \{\be_\alpha^\epsilon\mid 
\alpha\in\Phi\}\:\mbox{ is a basis of $\fg$}.\]
If $\alpha,\beta,\alpha+\beta\in \Phi$, then we write as usual 
$[\be_\alpha^\epsilon,\be_\beta^\epsilon]=N_{\alpha,\beta}^\epsilon 
\be_{\alpha+\beta}^\epsilon$ where $N_{\alpha,\beta}^\epsilon\in\C$. 

Note that, if we replace $\epsilon$ by $-\epsilon$, then $\varphi$ 
is replaced by $-\varphi$. Hence, $\be_\alpha^{-\epsilon}=
-\be_\alpha^\epsilon$ and $N_{\alpha,\beta}^{-\epsilon}=-N_{\alpha,
\beta}^\epsilon$ for all $\alpha,\beta \in\Phi$ such that $\alpha+
\beta\in\Phi$. Thus, the passage from $B^\epsilon$ to $B^{-\epsilon}$
is determined by a very simple and explicit rule.
\end{defn}

In order to describe the elements $\be_\alpha^\epsilon$ more explicitly,
we need one further ingredient. In the set-up of Section~\ref{sec1}, let 
$W\subseteq \GL(E)$ be the Weyl group of $\Phi$; we have $W=\langle s_i
\mid i \in I \rangle$ where $s_i\colon E \rightarrow E$ is defined by 
$s_i(e)=e-(e, \alpha_i^\vee) \alpha_i$ for $e \in E$. It is well-known 
that the generators $s_i\in W$ can be lifted to automorphisms of~$\fg$. 
Indeed, following \cite[Chap.~VIII, \S 2, no.~2, formule~(1)]{B}, we set 
for any $i\in I$:
\[ n_i(t):=\exp(te_i)\exp(-t^{-1}f_i)\exp(te_i)\colon M\rightarrow M 
\qquad \mbox{where $0\neq t\in \C$}.\]
The maps $n_i(t)\colon M\rightarrow M$ are compatible with $\varphi
\colon M\rightarrow \fg$ by the following rule.

\begin{lem} \label{remadj1} We have $n_i(t)\varphi(m)n_i(t)^{-1}=\varphi
\bigl(n_i(t)(m)\bigr)$ for all $m\in M$, $i\in I$ and $0\neq t\in\C$. 
(Here, conjugation with $n_i(t)$ on the left hand side takes place inside 
$\gl(M)$.) In particular, we have $n_i(t)\fg n_i(t)^{-1}\subseteq \fg$.
\end{lem}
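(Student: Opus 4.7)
The plan is to reduce the identity for $n_i(t)$ to a single-factor version and prove the latter directly. Since $n_i(t)=\exp(te_i)\exp(-t^{-1}f_i)\exp(te_i)$, it suffices to establish that for any $x\in\fg$ which acts nilpotently on $M$ (which covers our $e_i,f_i$ by Lemma~\ref{lem41}) and any $m\in M$,
\[ \exp(x)\,\varphi(m)\,\exp(x)^{-1} = \varphi\bigl(\exp(x)(m)\bigr). \]
Applying this identity three times in succession---to $m$, then to $\exp(te_i)(m)$, then to $\exp(-t^{-1}f_i)\exp(te_i)(m)$---then yields the statement for $n_i(t)$.

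To prove this single-factor identity I would combine two standard ingredients. First, since $x$ is nilpotent on $M$, the operator $\mathrm{ad}(x)=[x,\cdot\,]$ on $\gl(M)$ is also nilpotent, and the usual formal computation shows that conjugation by $\exp(x)$ agrees with $\exp(\mathrm{ad}(x))$; that is, $\exp(x)\,y\,\exp(-x)=\exp(\mathrm{ad}(x))(y)$ for all $y\in\gl(M)$ (all sums involved are finite). Second, $\varphi\colon M\rightarrow \fg$ is a $\fg$-module isomorphism with $\fg$ carrying its adjoint action (Remark~\ref{remadj}), so $\varphi(x(m))=[x,\varphi(m)]=\mathrm{ad}(x)(\varphi(m))$; iterating yields $\varphi(x^k(m))=\mathrm{ad}(x)^k(\varphi(m))$ for all $k\geq 0$, and summing with weights $1/k!$ produces $\varphi(\exp(x)(m))=\exp(\mathrm{ad}(x))(\varphi(m))$. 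Substituting $y=\varphi(m)$ into the first identity then matches the right-hand side of the second, giving the desired equality.

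The final assertion $n_i(t)\,\fg\,n_i(t)^{-1}\subseteq \fg$ follows immediately from surjectivity of $\varphi$: any $y\in\fg$ has the form $y=\varphi(m)$ for a unique $m\in M$, and the identity just proved gives $n_i(t)\,y\,n_i(t)^{-1}=\varphi(n_i(t)(m))\in \fg$. I do not anticipate any essential obstacle here; the only care needed is the routine bookkeeping of the three successive conjugations and the consistent use of $\exp(-x)=\exp(x)^{-1}$ at each step.
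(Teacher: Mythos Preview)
Your proposal is correct and follows essentially the same argument as the paper: reduce to a single nilpotent $x\in\fg$, use $\exp(x)\,y\,\exp(-x)=\exp(\mathrm{ad}(x))(y)$, combine with the $\fg$-module intertwining property $\varphi(x(m))=\mathrm{ad}(x)(\varphi(m))$ iterated and summed, and then compose over the three factors of $n_i(t)$. The only cosmetic difference is that you spell out the surjectivity argument for $n_i(t)\,\fg\,n_i(t)^{-1}\subseteq\fg$, which the paper leaves implicit.
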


\begin{proof} Let $x\in\fg$ and assume that $x$ is a nilpotent linear map; 
thus, we can form $\exp(x)\in \GL(M)$. Then $\mbox{ad}(x)\colon\fg 
\rightarrow \fg$ is a nilpotent derivation and we have $\exp(x)\varphi(m)
\exp(x)^{-1}=\exp(\mbox{ad}(x))(\varphi(m))$; see, e.g., \cite[4.3.1, 
4.5.1]{Ca1}. Now, we have $\mbox{ad}(x)(\varphi(m))=[x,\varphi(m)]=
\varphi(x(m))$ and so $\mbox{ad}(x)^k(\varphi(m))=\varphi(x^k(m))$ for all
$k\geq 0$. This immediately implies that $\exp(\mbox{ad}(x))(\varphi(m))=
\varphi(\exp(x)(m))$ and so 
\[\exp(x)\varphi(m)\exp(x)^{-1}=\varphi(\exp(x)(m)).\]
Applying this rule with $x=te_i$ and $x=-t^{-1}f_i$ yields the desired 
statement.
\end{proof}

\begin{lem} \label{actni} Let $i\in I$. Then we have for any $j\in I$ 
and $\alpha\in \Phi$:
\begin{align*}
n_i(t)(u_j)&=u_j-|(\alpha_i,\alpha_j^\vee)|u_i,\\
n_i(t)(v_\alpha)&=\left\{\begin{array}{cl} 
t^{-2}v_{-\alpha_i} & \mbox{ if $\alpha=\alpha_i$},\\ 
t^2v_{\alpha_i} & \mbox{ if $\alpha=-\alpha_i$},\\ 
-(-1)^{m_i^-(\alpha)}t^{-(\alpha,\alpha_i^\vee)}v_{s_i(\alpha)}&
\mbox{ otherwise}. \end{array}\right.
\end{align*}
Consequently, $n_i(t)^2(u_j)=u_j$, $n_i(t)^2(v_\alpha)=
(-1)^{(\alpha,\alpha_i^\vee)}v_\alpha$ and so $n_i(t)^4=
\operatorname{id}_M$. 
\end{lem}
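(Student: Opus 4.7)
The plan is to compute $n_i(t) = x_i(t)\,y_i(-t^{-1})\,x_i(t)$ directly on each basis vector of $M$, using the explicit formulas for $x_i(t)=\exp(te_i)$ and $y_i(s)=\exp(sf_i)$ recorded in Remark~\ref{cheva}. To keep the bookkeeping manageable, I would organise the calculation according to the decomposition of $M$ into $e_i$- and $f_i$-invariant subspaces under the $\slm_2$-triple $(e_i,h_i,f_i)$, treating three types of summand separately.

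The two easy pieces are handled directly. First, $\C v_{-\alpha_i}\oplus \C u_i\oplus \C v_{\alpha_i}$ is a three-dimensional $\slm_2$-submodule (visible from the definitions of $e_i,f_i$), on which a short three-step substitution immediately yields $n_i(t)(v_{\alpha_i})=t^{-2}v_{-\alpha_i}$, $n_i(t)(v_{-\alpha_i})=t^{2}v_{\alpha_i}$ and $n_i(t)(u_i)=-u_i$. Second, for each $j\neq i$ the vector $w_j := u_j-\tfrac12|(\alpha_i,\alpha_j^\vee)|\,u_i$ is annihilated by both $e_i$ and $f_i$ (a one-line check from the definitions in Section~\ref{sec2}), hence fixed by $n_i(t)$; combining this with $n_i(t)(u_i)=-u_i$ gives $n_i(t)(u_j)=u_j-|(\alpha_i,\alpha_j^\vee)|\,u_i$.

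The main case is $\alpha\neq\pm\alpha_i$, and I expect this to be the principal obstacle. The key observation is that the $\alpha_i$-string $\alpha-q\alpha_i,\ldots,\alpha+p\alpha_i$ (with $q=m_i^-(\alpha)-1$ and $p=m_i^+(\alpha)-1$) lies entirely in $\Phi\setminus\{\pm\alpha_i\}$, since $\Phi$ is reduced. Consequently $V_\alpha := \bigoplus_{k=-q}^{p}\C v_{\alpha+k\alpha_i}$ is $e_i,f_i$-stable, the vector $u_i$ never appears in any intermediate step, and the formulas of Section~\ref{sec2} give on $V_\alpha$ the standard action of the $(p+q+1)$-dimensional irreducible $\slm_2$-representation.

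The cleanest execution I see is to realise $V_\alpha$ as the space of homogeneous polynomials of degree $n=p+q$ in two variables $x,y$ via $v_{\alpha+k\alpha_i}\leftrightarrow \binom{n}{k+q}\,x^{k+q}y^{p-k}$, under which $e_i$ corresponds to $x\partial_y$ and $f_i$ to $y\partial_x$. Each exponential then acts by a linear substitution (e.g.\ $\exp(te_i)\colon f(x,y)\mapsto f(x,y+tx)$), and composing the three substitutions yields $n_i(t)\colon x^ay^b\mapsto (-1)^a t^{b-a}x^by^a$; translating back produces the claimed formula, with sign $(-1)^q=-(-1)^{m_i^-(\alpha)}$ and exponent $p-q=-(\alpha,\alpha_i^\vee)$. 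The concluding assertions now follow by iteration: $n_i(t)^2(u_j)=u_j$ is immediate, and for $\alpha\neq\pm\alpha_i$ one computes $n_i(t)^2(v_\alpha)=(-1)^{m_i^-(\alpha)+m_i^-(s_i(\alpha))}v_\alpha$, which equals $(-1)^{(\alpha,\alpha_i^\vee)}v_\alpha$ because reflecting reverses the string so $m_i^-(s_i(\alpha))=m_i^+(\alpha)$ and $m_i^-(\alpha)+m_i^+(\alpha)\equiv (\alpha,\alpha_i^\vee)\pmod 2$; hence $n_i(t)^4=\operatorname{id}_M$.
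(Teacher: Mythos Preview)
Your argument is correct. Both you and the paper first isolate the ``trivial'' pieces and then break the rest of $M$ into $\alpha_i$-string subspaces $V_\alpha$; the overall architecture is the same. The execution, however, differs in two respects, and the differences are worth noting.

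For the zero-weight space, the paper simply invokes the explicit formulae of \ref{cheva} and says the verification of $n_i(t)(u_j)=u_j-|(\alpha_i,\alpha_j^\vee)|u_i$ is straightforward. Your device of writing $u_j=w_j+\tfrac12|(\alpha_i,\alpha_j^\vee)|u_i$ with $w_j$ killed by both $e_i$ and $f_i$ is a cleaner alternative that makes the answer obvious without any three-step substitution.

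On a string $V_\alpha$ of length $p+q+1$, the paper proceeds by brute force: it records the action of $e_i,f_i$ in the basis $(v_\beta,\ldots,v_{\beta+p\alpha_i})$, multiplies out the three exponentials, and displays the resulting matrix of $n_i(t)$ separately for $p+q=0,1,2,3$ (four small matrices), then reads off the sign and the power of $t$ entry by entry. Your route via the polynomial model $v_{\beta+l\alpha_i}\leftrightarrow\binom{n}{l}x^ly^{n-l}$, with $e_i=x\partial_y$, $f_i=y\partial_x$, replaces those four matrix products by a single composition of linear substitutions and yields the closed formula $n_i(t)\colon x^ay^b\mapsto(-1)^at^{b-a}x^by^a$ uniformly in $n$. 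This is more conceptual and more general (it would work verbatim if longer root strings could occur); the paper's approach, by contrast, is entirely self-contained and does not appeal to the representation theory of $\slm_2$, in keeping with its stated aim of remaining elementary. Both arrive at exactly the same sign $(-1)^q=-(-1)^{m_i^-(\alpha)}$ and exponent $p-q=-(\alpha,\alpha_i^\vee)$.
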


(Note that, with respect to the adjoint representation of $\fg$, analogous 
formulae are only known up to signs, see \cite[p.~36]{Ch} and also 
\cite[Prop.~6.4.2]{Ca1}.)

\begin{proof} Using the formulae in \ref{cheva} for the action of $x_i(t)$ 
and $y_i(-t^{-1})$ on $M$, it is straightforward to check the formulae for 
$n_i(t)(u_j)$ and $n_i(t)(v_{\pm\alpha_i})$. In particular, we see that 
the subspace $M_0:=\fh+\C v_{\alpha_i}+\C v_{-\alpha_i}\subseteq M$ is 
invariant under $n_i(t)$. Now define an equivalence relation~$\sim$ on $\Phi
\setminus\{\pm \alpha_i\}$ by $\alpha\sim \alpha'$ if $\alpha'-\alpha=
m\alpha_i$ for some $m\in\Z$. Then we have a direct sum decomposition 
$M=M_0\oplus \bigoplus_{\cO} M_\cO$ where $\cO$ runs over the equivalence 
classes of $\Phi\setminus \{\pm \alpha_i\}$ (with respect to $\sim$) and 
$M_{\cO}:=\langle v_\alpha\mid \alpha\in \cO\rangle_\C$. By the definitions 
of $\exp(te_i)$ and $\exp(-t^{-1}f_i)$, each of the direct summands $M_\cO$ 
is invariant under~$n_i(t)$. So we can verify the desired identity term by 
term in this decomposition. Now let $\alpha\in\Phi$, $\alpha\neq\pm\alpha_i$.
Let $\cO$ be the equivalence class of $\alpha$. Then $\cO$ is the
$\alpha_i$-string through $\alpha$ (see \ref{strgs}) and so we can write 
$\cO=\{\beta,\beta+\alpha_i,\ldots,\beta+p\alpha_i\}$ where $\beta\in\cO$
and $p=-(\beta, \alpha_i^\vee) \in \{0,1,2,3\}$; thus, $\alpha=\beta+
k\alpha_i$ where $0\leq k\leq p$. Now, by the definition of $e_i,f_i$, we 
have:
\begin{align*}
e_i\colon & v_\beta  \mapsto  v_{\beta+\alpha_i}, \;\; v_{\beta+\alpha_i}
\mapsto 2v_{\beta+2\alpha_i},\;\; \ldots,\;\;  v_{\beta+(p-1)\alpha_i}
\mapsto p v_{\beta+p\alpha_i},\;\, v_{\beta+p\alpha_i} \mapsto 0,\\ 
f_i\colon & v_\beta \mapsto 0, \;\; v_{\beta+\alpha_i} \mapsto pv_{\beta},
\;\;\ldots, \;\; v_{\beta+2\alpha_i}\mapsto (p{-}1)v_{\beta+\alpha_i},
\;\; v_{\beta+p\alpha_i} \mapsto v_{\beta+(p-1)\alpha_i}.
\end{align*}
Hence, we can explicitly compute the matrix of $x_i(t)$ and $y_i(-t^{-1})$
on $M_\cO$. For $p=0,1,2,3$ the resulting matrix of $n_i(t)$ on $M_\cO$
is given by 
\[(1),\quad\left(\begin{array}{cc} 0 & -t^{-1} \\ t & 0\end{array}\right),
\quad \left(\begin{array}{ccc} 0 & 0  & t^{-2}\\0 & -1 & 0 \\ t^2&0 & 0
\end{array}\right), \quad\left(\begin{array}{cccc} 0&0 &0 &-t^{-3} \\ 
0&0 &t^{-1} & 0 \\ 0 &-t &0 &0 \\t^3 &0 &0 &0 \end{array}\right),\]
respectively. Now, if $\alpha=\beta+k\alpha_i$, then 
$(\alpha,\alpha_i^\vee)=2k-p$ and $s_i(\alpha)=\alpha-(2k-p)\alpha_i=
\beta+p \alpha_i-k\alpha_i$; furthermore, $m_i^-(\alpha)=k+1$. Then the
desired formulae for $n_i(t)(v_\alpha)$ can be simply read off the above
matrices. 
\end{proof}

\begin{lem} \label{acthal} We have $n_i(t)h_\alpha=h_{s_i(\alpha)}n_i(t)$
for all $i\in I$, $0\neq t\in\C$ and $\alpha\in\Phi$. (See
Remark~\ref{coroots} for the definition of $h_\alpha$.)
\end{lem}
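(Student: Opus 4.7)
The plan is to verify the identity $n_i(t) h_\alpha = h_{s_i(\alpha)} n_i(t)$ as linear maps $M \to M$ by evaluating both sides on each element of the basis $\{u_j\} \cup \{v_\beta\}$, using the explicit formulae from Lemma~\ref{actni} for the action of $n_i(t)$.

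First I would dispose of the $u_j$ case: by definition $h_\alpha(u_j) = 0$, so the left-hand side vanishes. By Lemma~\ref{actni}, $n_i(t)(u_j)$ is a linear combination of $u_j$ and $u_i$, and $h_{s_i(\alpha)}$ annihilates every basis vector of the form $u_k$, so the right-hand side vanishes as well.

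Next I would handle a general root $\beta \in \Phi$ with $\beta \neq \pm\alpha_i$. By Lemma~\ref{actni}, we have $n_i(t)(v_\beta) = c \cdot v_{s_i(\beta)}$ for an explicit nonzero scalar $c = -(-1)^{m_i^-(\beta)} t^{-(\beta,\alpha_i^\vee)}$. Then the left-hand side evaluates to $n_i(t) h_\alpha(v_\beta) = (\beta, \alpha^\vee) c \, v_{s_i(\beta)}$, while the right-hand side evaluates to $h_{s_i(\alpha)}(c\, v_{s_i(\beta)}) = c \, (s_i(\beta), s_i(\alpha)^\vee) v_{s_i(\beta)}$. The two scalars coincide because $s_i$ preserves the bilinear form $(\,,\,)$, hence also the pairing $(x, y^\vee) = 2(x,y)/(y,y)$; concretely, $(s_i(\beta), s_i(\alpha)^\vee) = (\beta, \alpha^\vee)$.

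Finally I would treat the two exceptional cases $\beta = \pm \alpha_i$. For $\beta = \alpha_i$, Lemma~\ref{actni} gives $n_i(t)(v_{\alpha_i}) = t^{-2} v_{-\alpha_i}$, and $s_i(\alpha_i) = -\alpha_i$. The left-hand side is $(\alpha_i, \alpha^\vee) t^{-2} v_{-\alpha_i}$, the right-hand side is $t^{-2}(-\alpha_i, s_i(\alpha)^\vee) v_{-\alpha_i}$, and invariance of the form under $s_i$ (applied to $\alpha_i \mapsto -\alpha_i$) again equates the two. The case $\beta = -\alpha_i$ is identical. I do not foresee a real obstacle here: the whole argument is a routine case check, and the only conceptual ingredient is the $W$-invariance of the inner product together with the explicit description of $n_i(t)$ provided by Lemma~\ref{actni}.
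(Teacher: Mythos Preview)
Your proposal is correct and follows essentially the same approach as the paper: the paper's proof consists of the single sentence that this is ``a straightforward verification using the formulae in Lemma~\ref{actni} and the fact that $(s_i(\alpha))^\vee=s_i(\alpha^\vee)$,'' and you have spelled out exactly that verification on the basis of $M$. Your key identity $(s_i(\beta),s_i(\alpha)^\vee)=(\beta,\alpha^\vee)$ is precisely the $W$-invariance of the form combined with $(s_i(\alpha))^\vee=s_i(\alpha^\vee)$, so the ingredients match.
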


\begin{proof} This is a straightforward verification using the formulae
in Lemma~\ref{actni} and the fact that $(s_i(\alpha))^\vee=
s_i(\alpha^\vee)$; cf.\ \cite[Chap.~VIII, \S 2, Lemme~1]{B}.
\end{proof}

\begin{cor} \label{intea} Let $\alpha\in\Phi$. Then $\be_\alpha^\epsilon
\colon M\rightarrow M$ is nilpotent. The matrix of $\be_\alpha^\epsilon$ 
with respect to the given basis of $M$ has entries in $\Z$ and these
entries are relatively prime. For every integer $k\geq 0$, the matrix of 
$\frac{1}{k!} (\be_\alpha^\epsilon)^k$ also has entries in $\Z$. 
\end{cor}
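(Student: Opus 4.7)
The plan is to reduce all three claims to the already-understood case $\alpha = \alpha_i$ by means of a Weyl group conjugation. Since $\Phi$ is irreducible and $W$ acts transitively on roots of each length, there exist $w \in W$ and $i \in I$ with $w(\alpha_i) = \alpha$. Fix any expression $w = s_{i_r} \cdots s_{i_1}$ and set $\dot{w} := n_{i_r}(1)\cdots n_{i_1}(1) \in \GL(M)$. Iterating Lemma~\ref{actni} at $t = 1$ — at each step $n_i(1)$ either swaps $v_{\alpha_i} \leftrightarrow v_{-\alpha_i}$ or sends $v_\gamma$ (for $\gamma \neq \pm\alpha_i$) to $\pm v_{s_i(\gamma)}$ — one obtains $\dot{w}(v_{\alpha_i}) = \eta\, v_\alpha$ for some sign $\eta \in \{\pm 1\}$. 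Combining this with Lemma~\ref{remadj1} and the identity $\varphi(v_{\alpha_i}) = \epsilon(i)\, e_i$ produces the key relation
\[
\be_\alpha^\epsilon = \eta\,\epsilon(i)\, \dot{w}\, e_i\, \dot{w}^{-1}.
\]

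From Lemma~\ref{actni} with $t = 1$, every entry of each matrix $n_i(1)$ lies in $\{0,\pm 1, -2, -3\}$, so $n_i(1)$ is an integer matrix; as $n_i(1)^4 = \operatorname{id}_M$, also $n_i(1)^{-1} = n_i(1)^3$ has integer entries, and therefore so do $\dot{w}$ and $\dot{w}^{-1}$. Hence $\be_\alpha^\epsilon = \pm\dot{w}\, e_i\, \dot{w}^{-1}$ has entries in $\Z$. Nilpotency is then immediate since $e_i$ is strictly upper triangular by Lemma~\ref{lem41} and conjugation preserves nilpotency. For coprimality of entries: if a prime $p$ divided every entry of $\be_\alpha^\epsilon$, then from $e_i = \pm\epsilon(i)\, \dot{w}^{-1}\, \be_\alpha^\epsilon\, \dot{w}$ every entry of $e_i$ would also be divisible by $p$, contradicting the fact that $e_i(v_{-\alpha_i}) = u_i$ produces a matrix entry equal to $1$.

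For the divided-power statement, conjugation commutes with taking powers, so $(\be_\alpha^\epsilon)^k = \pm\,\dot{w}\, e_i^k\, \dot{w}^{-1}$, and it is enough to show that $\tfrac{1}{k!} e_i^k$ has integer entries. To that end, decompose $M$ into $e_i$-stable subspaces exactly as in the proof of Lemma~\ref{actni}: $M = N \oplus \bigoplus_\cO M_\cO$, where $N := \langle u_j \mid j \in I\rangle_\C + \C v_{\alpha_i} + \C v_{-\alpha_i}$ and $\cO$ ranges over the $\alpha_i$-strings in $\Phi \setminus \{\pm\alpha_i\}$, with $M_\cO := \langle v_\gamma \mid \gamma \in \cO\rangle_\C$. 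On each $M_\cO$ labelled $v_\beta, v_{\beta+\alpha_i},\ldots, v_{\beta+p\alpha_i}$, the definitions in Section~\ref{sec2} give $e_i(v_{\beta + j\alpha_i}) = (j+1)\, v_{\beta + (j+1)\alpha_i}$, so $\tfrac{1}{k!} e_i^k$ acts by $v_{\beta+j\alpha_i} \mapsto \binom{j+k}{j} v_{\beta+(j+k)\alpha_i}$ (zero when out of range), an integer. On $N$ a direct computation shows $e_i^k = 0$ for $k \geq 3$, with $e_i^2/2$ sending $v_{-\alpha_i} \mapsto v_{\alpha_i}$ and annihilating every other basis vector of $N$.

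The only real point of care is sign bookkeeping in the Weyl group lift $\dot{w}$, but since the assertions of the corollary involve only $\pm \be_\alpha^\epsilon$, the sign $\eta$ never needs to be computed explicitly; beyond the lemmas just cited, the sole external input is the transitivity of $W$ on roots of each length in an irreducible root system (see, e.g., \cite[\S 10]{H}).
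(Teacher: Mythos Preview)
Your proof is correct and follows essentially the same route as the paper's: write $\alpha=w(\alpha_i)$, lift $w$ to a product of the $n_{i_j}(1)$, and use Lemmas~\ref{remadj1} and~\ref{actni} to obtain $\be_\alpha^\epsilon=\pm\,\dot{w}\,e_i\,\dot{w}^{-1}$, from which nilpotency, integrality, and coprimality follow exactly as you argue. The only difference is cosmetic: where the paper simply points to the formulae in~\ref{cheva} to assert that $\tfrac{1}{k!}e_i^k$ has integer entries, you unpack this by decomposing $M$ into $\alpha_i$-strings and computing the binomial coefficients directly---which is precisely what those formulae encode.
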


\begin{proof} We can write $\alpha=w(\alpha_i)$ for some $w\in W$ and 
$i\in I$; furthermore, $w=s_{i_1} \cdots s_{i_l}$ where $l\geq 0$, $i_1,
\ldots,i_l\in I$. Then we set $\eta:=n_{i_1}(1)\cdots n_{i_l}(1)\colon
M \rightarrow M$. By Lemma~\ref{actni}, each $n_j(1)$ is represented by 
a matrix with entries in $\Z$; furthermore, $n_j(1)^4=\mbox{id}_M$ and 
so $\det(n_j(1))=\pm 1$. Consequently, $\eta$ is also represented by
a matrix with entries in $\Z$ and we have $\det(\eta)=\pm 1$. Using 
Lemma~\ref{actni}, we obtain that $v_\alpha= v_{w(\alpha_i)}=\delta\eta 
(v_{\alpha_i})$ where $\delta=\pm 1$. This yields the formula:
\begin{equation*}
\be_\alpha^\epsilon=\varphi(v_\alpha)=\delta\varphi(\eta(v_{\alpha_i}))=
\delta\eta\varphi(v_{\alpha_i})\eta^{-1}=\delta\epsilon(i)\eta e_i\eta^{-1}
=\pm \eta e_i\eta^{-1}, \tag{$*$}
\end{equation*}
where the third equality holds by Lemma~\ref{remadj1} and the fourth 
equality by Lemma~\ref{remadj0}. Since $e_i$ is nilpotent, ($*$) shows
that $\be_\alpha^\epsilon$ is also nilpotent. Now consider the integer 
matrix representing $e_i$; since $e_i(v_{-\alpha_i})=u_i$, at least one 
entry of this matrix is~$1$. Using ($*$), we conclude that the 
entries of the matrix of~$\be_\alpha^\epsilon$ are in $\Z$ and they are 
relatively prime. Similarly, since the matrix of $\frac{1}{k!}e_i^k$ has 
entries in $\Z$ (this is implicit in the formulae in \ref{cheva}),
the same is true for $\frac{1}{k!}(\be_\alpha^\epsilon)^k$ by ($*$).
\end{proof}

Note that the elements $\be_\alpha^\epsilon$ can be explicitly computed
using ($*$) in the above proof. Furthermore, since the map 
$\be_\alpha^\epsilon\colon M\rightarrow M$ is nilpotent, we can define 
\[ x_\alpha^\epsilon(t):=\exp(t\be_\alpha^\epsilon)\in\GL(M) \qquad 
\mbox{for any $t\in\C$}.\]
Now let $R$ be a commutative ring with $1$, as in \ref{cheva}. Since 
$\frac{1}{k!}(\be_\alpha^\epsilon)^k$ is represented by an integer matrix 
for any integer $k\geq 0$, we can again apply a specialisation argument 
and also obtain elements $\bar{x}_\alpha^\epsilon(t)\in \GL(\bar{M})$ for 
all $t\in R$. Using ($*$) once more, one easily sees that 
$\bar{x}_\alpha^\epsilon(t)\in G_R$. Thus, we have 
\[G_R=\langle \bar{x}_\alpha^\epsilon(t)\mid \alpha\in\Phi,t\in R\rangle
\subseteq \GL(\bar{M}).\]
Also note that $\bar{x}_\alpha^{-\epsilon}(t)=\bar{x}_\alpha^\epsilon(-t)$ 
for all $\alpha\in\Phi$ and $t\in R$. 

\begin{thm}[Cf.\ Chevalley \protect{\cite[Th\'eor\`eme 1]{Ch}}] 
\label{chbase} We have the 
following relations.

{\rm (a)} $\tilde{\omega}(\be_\alpha^\epsilon)=-\be_{-\alpha}^\epsilon$ 
for all $\alpha\in \Phi$, with $\tilde{\omega} \colon\fg \rightarrow\fg$ 
as in Remark~\ref{rem200a}. 

{\rm (b)} $[\be_\alpha^\epsilon,\be_{-\alpha}^\epsilon]=
(-1)^{\hgt(\alpha)}h_\alpha$ for all $\alpha\in \Phi$, where $\hgt(\alpha)$
denotes the height of $\alpha$. 

{\rm (c)} $N_{\alpha, \beta}^\epsilon=\pm m_\alpha^-(\beta)$ if 
$\alpha,\beta,\alpha+\beta\in\Phi$.
\end{thm}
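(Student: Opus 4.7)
The plan is to treat (a) by a module-theoretic uniqueness argument and then to derive (b) and (c) by conjugating with the lifts $n_i(t)$ to reduce to the already-known simple-root cases. For (a), I would define $\psi \colon M \to \fg$ by $\psi(m) := -\tilde{\omega}(\varphi(\omega(m)))$. The relations of Remark~\ref{lem20} say that conjugation by $\omega$ on $\gl(M)$ agrees on the generators $e_i,f_i,h_i$ with the automorphism $\tilde{\omega}$ of Remark~\ref{rem200a}, hence on all of $\fg$; thus $\omega\circ x=\tilde{\omega}(x)\circ\omega$ for $x\in\fg$, and a short computation using that $\varphi$ is a $\fg$-homomorphism shows $\psi$ is one too. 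By irreducibility of $M$ (Lemma~\ref{prop4}), $\psi=c\varphi$ for some $c\in\C$, and evaluating at $v_{\alpha_i}$ gives $\psi(v_{\alpha_i})=-\tilde{\omega}(-\epsilon(i)f_i)=\epsilon(i)e_i=\varphi(v_{\alpha_i})$; so $c=1$ and (a) follows.

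For (b), I first write $[\be_\alpha^\epsilon,\be_{-\alpha}^\epsilon]=c_\alpha h_\alpha$ for some $c_\alpha\in\C$, which is legitimate because the bracket lies in $\fg_0=\fh$ and, using the eigenvalue description of $h_\alpha$ in Remark~\ref{coroots}, its eigenvalue on each $v_\beta$ is forced to be proportional to $(\beta,\alpha^\vee)$. One checks $c_\alpha=c_{-\alpha}$, and the base case $\alpha=\alpha_i$ gives $c_{\alpha_i}=-\epsilon(i)^2=-1=(-1)^{\hgt(\alpha_i)}$. For the inductive step I conjugate both sides by $n_i(t)\in\GL(M)$, which preserves $\fg$ by Lemma~\ref{remadj1}; combining Lemmas~\ref{remadj1}, \ref{actni}, and~\ref{acthal} yields, for $\alpha\neq\pm\alpha_i$,
\[ n_i(t)\,\be_\alpha^\epsilon\,n_i(t)^{-1}=-(-1)^{m_i^-(\alpha)}\,t^{-(\alpha,\alpha_i^\vee)}\,\be_{s_i(\alpha)}^\epsilon,\qquad n_i(t)\,h_\alpha\,n_i(t)^{-1}=h_{s_i(\alpha)}. \]
Substituting into $c_\alpha h_\alpha=[\be_\alpha^\epsilon,\be_{-\alpha}^\epsilon]$, the powers of $t$ cancel and one obtains $c_\alpha=(-1)^{m_i^-(\alpha)+m_i^+(\alpha)}c_{s_i(\alpha)}$. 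Now the congruence $m_i^-(\alpha)+m_i^+(\alpha)\equiv m_i^-(\alpha)-m_i^+(\alpha)=(\alpha,\alpha_i^\vee)=\hgt(\alpha)-\hgt(s_i\alpha)\pmod 2$ shows that $c_\alpha(-1)^{\hgt(\alpha)}$ is $s_i$-invariant for $\alpha\neq\pm\alpha_i$, and trivially invariant under negation; since every $W$-orbit on $\Phi$ meets $\Pi$ and the base case gives value $1$ there, we conclude $c_\alpha=(-1)^{\hgt(\alpha)}$.

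For (c), the simple case $\alpha=\alpha_i$ is direct: from $\varphi(e_i\cdot v_\beta)=[e_i,\varphi(v_\beta)]$ and $e_i(v_\beta)=m_i^-(\beta)v_{\alpha_i+\beta}$, I get $[\be_{\alpha_i}^\epsilon,\be_\beta^\epsilon]=\epsilon(i)m_i^-(\beta)\be_{\alpha_i+\beta}^\epsilon$, i.e., $N_{\alpha_i,\beta}^\epsilon=\pm m_{\alpha_i}^-(\beta)$. For general $(\alpha,\beta)$ with $\alpha+\beta\in\Phi$, I conjugate $[\be_\alpha^\epsilon,\be_\beta^\epsilon]=N_{\alpha,\beta}^\epsilon\be_{\alpha+\beta}^\epsilon$ by $n_i(t)$; the $t$-exponents match on both sides because $(\alpha,\alpha_i^\vee)+(\beta,\alpha_i^\vee)=(\alpha+\beta,\alpha_i^\vee)$, leaving $N_{s_i\alpha,s_i\beta}^\epsilon=\pm N_{\alpha,\beta}^\epsilon$. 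Iterating along any $W$-chain and using the invariance $m_{w\alpha}^-(w\beta)=m_\alpha^-(\beta)$, I pick $w$ with $w(\alpha)\in\Pi$ and reduce to the simple case. The main obstacle will be the parity computation in (b)—confirming that the sign coming from the $n_i$-conjugation matches exactly $(-1)^{\hgt(\alpha)-\hgt(s_i\alpha)}$, which is where the clean final answer $(-1)^{\hgt(\alpha)}$ crystallizes.
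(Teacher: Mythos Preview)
Your argument for (a) via Schur's lemma is correct and genuinely different from the paper's. The paper proves (a) by induction on $\hgt(\alpha)$, writing $\alpha=\beta+\alpha_i$ and pushing $\tilde{\omega}$ through the relation $e_i(v_\beta)=m_i^-(\beta)v_\alpha$. Your approach is cleaner: once one checks that conjugation by $\omega$ on $\gl(M)$ restricts to $\tilde{\omega}$ on $\fg$, the map $\psi=-\tilde{\omega}\circ\varphi\circ\omega$ is indeed a $\fg$-module map, and irreducibility of $M$ finishes the job in one stroke. The paper's route, on the other hand, stays closer to the explicit combinatorics and avoids invoking Schur.

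For (b) and (c) your approach is essentially the paper's own: reduce to simple roots by conjugating with the lifts $n_i(t)$, using Lemmas~\ref{remadj1}, \ref{actni}, \ref{acthal}. The paper packages (c) by conjugating once with a product $\eta=n_{i_1}(1)\cdots n_{i_l}(1)$, whereas you iterate $s_i$'s one at a time; and in (b) the paper inducts on height while you invoke $W$-transitivity onto $\Pi$. These are cosmetic differences.

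There is one genuine soft spot in your (b). You assert up front that $[\be_\alpha^\epsilon,\be_{-\alpha}^\epsilon]=c_\alpha h_\alpha$, justifying it by saying the eigenvalue on $v_\beta$ ``is forced to be proportional to $(\beta,\alpha^\vee)$''. That is not something one can read off directly from the formulae in Section~\ref{sec2}: you have no explicit description of how $\be_{\pm\alpha}^\epsilon$ acts on $v_\beta$ for general $\alpha$. The statement is of course true---it is the standard fact (via the Killing form) that $[\fg_\alpha,\fg_{-\alpha}]=\C h_\alpha$ in a semisimple Lie algebra, and since Theorem~\ref{fincor} is already in hand you may simply cite that. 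Alternatively, you do not need the proportionality \emph{a priori} at all: set $H_\alpha:=[\be_\alpha^\epsilon,\be_{-\alpha}^\epsilon]\in\fh$, verify the base case $H_{\alpha_i}=-h_{\alpha_i}$, and then your own conjugation identity together with Lemma~\ref{acthal} gives
\[
H_\alpha=(-1)^{m_i^-(\alpha)+m_i^+(\alpha)}\,n_i(t)^{-1}H_{s_i(\alpha)}\,n_i(t)
=(-1)^{m_i^-(\alpha)+m_i^+(\alpha)}\,c_{s_i(\alpha)}\,h_\alpha
\]
whenever $H_{s_i(\alpha)}=c_{s_i(\alpha)}h_{s_i(\alpha)}$ is already known; so both the proportionality and the recursion for $c_\alpha$ come out of the same induction. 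Either fix is easy, but the sentence as written does not stand on its own.
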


\begin{proof} (a) Since $\tilde{\omega}^2=\mbox{id}_\fg$, it is enough to 
prove this for $\alpha\in \Phi^+$. We use induction on $\hgt(\alpha)$.
If $\hgt(\alpha)=1$, then $\alpha=\alpha_i$ for some $i \in I$. So the
assertion holds since $\be_{\alpha_i}^\epsilon=\epsilon(i)e_i$, 
$\be_{-\alpha_i}^\epsilon=-\epsilon(i)f_i$ and $\tilde{\omega}(e_i)=f_i$.
Now assume that $\hgt(\alpha)>1$. There is some $i\in I$ such that 
$(\alpha,\alpha_i^\vee)>0$ and $\beta:=\alpha-\alpha_i\in \Phi^+$. Then 
$\tilde{\omega}(\be_\beta^\epsilon)=-\be_{-\beta}^\epsilon$ by induction. 
Since $e_i(v_\beta)=m_i^-(\beta)v_\alpha$ and $f_i(v_{-\beta})=
m_i^+(-\beta)v_{-\alpha}$, we obtain that 
\begin{align*}
m_i^-(\beta)&\tilde{\omega}(\be_\alpha^\epsilon)=\tilde{\omega}\bigl(
\varphi(e_i(v_\beta))\bigr)=\tilde{\omega}([e_i,\varphi(v_\beta)])
=[\tilde{\omega}(e_i),\tilde{\omega}(\be_\beta^\epsilon)]=[f_i,
-\be_{-\beta}^\epsilon]\\ &=-[f_i,\varphi(v_{-\beta})]=-\varphi(f_i
(v_{-\beta}))=-m_i^+(-\beta)\varphi(v_{-\alpha})=-m_i^+(-\beta)
\be_{-\alpha}^\epsilon.
\end{align*}
It remains to use the fact that $m_i^-(\beta)=m_i^+(-\beta)$.

(b) Since $h_{-\alpha}=-h_\alpha$, it is enough to prove this for $\alpha
\in \Phi^+$. Again, we use induction on $\hgt(\alpha)$. If $\hgt(\alpha)=1$, 
then $\alpha=\alpha_i$ for some $i \in I$. So the desired formula holds
since $\be_{\alpha_i}^\epsilon=\epsilon(i)e_i$, $\be_{-\alpha_i}^\epsilon=
-\epsilon(i)f_i$ and $[e_i,f_i]=h_i=h_{\alpha_i}$. Now assume that
$\hgt(\alpha)>1$. As above, there is some $i\in I$ such that $(\alpha,
\alpha_i^\vee)>0$. Then $s_i(\alpha)\in \Phi^+$ and $\hgt(s_i(\alpha))=
\hgt(\alpha)-(\alpha,\alpha_i^\vee)$. So, by induction, 
$[\be_{s_i(\alpha)}^\epsilon,\be_{-s_i(\alpha)}^\epsilon]=
(-1)^{\hgt(s_i(\alpha))}h_{s_i(\alpha)}$. Since $\alpha\neq \pm \alpha_i$,
Lemmas~\ref{remadj1} and~\ref{actni} show that 
\[\be_{\pm \alpha}^\epsilon =n_i(1)^{-1}\varphi(n_i(1)(v_{\pm \alpha}))
n_i(1)=-(-1)^{m_i^-(\pm \alpha)}n_i(1)^{-1}\be_{\pm s_i(\alpha)}^\epsilon 
n_i(1).\]
Now note that $m_i^-(-\alpha)=m_i^+(\alpha)$. So, by \ref{strgs}, we 
have $(\alpha,\alpha_i^\vee)=m_i^-(\alpha)-m_i^-(-\alpha)$ and, hence, 
$\hgt(\alpha)\equiv m_i^-(\alpha)+m_i^-(-\alpha)+\hgt(s_i(\alpha))\bmod 2$.
This yields that 
\[[\be_\alpha^\epsilon,\be_{-\alpha}^\epsilon]=(-1)^{\hgt(\alpha)}
n_i(1)^{-1}h_{s_i(\alpha)}n_i(1)=(-1)^{\hgt(\alpha)}h_\alpha,\]
where the last equality holds by Lemma~\ref{acthal}.

(c) As in the proof of Lemma~\ref{intea}, write $\alpha=w(\alpha_i)$ where 
$w\in W$, $i\in I$; furthermore, $w=s_{i_1} \cdots s_{i_l}$ where 
$l\geq 0$, $i_1, \ldots,i_l\in I$. Then $\be_\alpha^\epsilon=\pm
\eta e_i\eta^{-1}$ where $\eta:=n_{i_1}(1)\cdots n_{i_l}(1)\colon
M \rightarrow M$. Now set $\gamma:=w^{-1}(\beta)\in\Phi$. 
By an analogous argument we find that $v_\beta=v_{w(\gamma)}=\pm 
\eta(v_\gamma)$, hence $\be_\beta^\epsilon=\varphi(v_\beta)=\pm \eta
\be_\gamma^\epsilon\eta^{-1}$ and so 
\[ [\be_\alpha^\epsilon,\be_\beta^\epsilon]=\pm [\eta e_i\eta^{-1},\eta
\be_\gamma^\epsilon\eta^{-1}]=\pm \eta[e_i,\be_\gamma^\epsilon]\eta^{-1}.\]
Let $\gamma-q\alpha_i,\ldots,\gamma-\alpha_i,\gamma, \gamma+\alpha_i,
\ldots,\gamma+p\alpha_i$ be the $\alpha_i$-string through $\gamma$. Then 
$e_i(v_\gamma)=(q+1)v_{\gamma+\alpha_i}$. Applying $\varphi$ yields that 
$[e_i,\be_\gamma^\epsilon] =(q+1)\be_{\gamma+\alpha_i}^\epsilon$ and so 
\[ [\be_\alpha^\epsilon,\be_\beta^\epsilon]=\pm(q+1)\eta\be_{\gamma+
\alpha_i}^\epsilon\eta^{-1}=\pm (q+1) \be_{w(\gamma+\alpha_i)}^\epsilon=
\pm (q+1)\be_{\alpha+\beta}^\epsilon.\]
It remains to note that $w$ maps the above $\alpha_i$-string through $\gamma$
onto the $\alpha$-string through $\beta$. Hence, $q+1=m_\alpha^-(\beta)$.
\end{proof}

\begin{rem} \label{remcb} Let $\bi=\sqrt{-1}\in\C$. For $\alpha
\in\Phi$ we define an element $\hat{e}_\alpha\in \fg$ by 
\[ \hat{e}_\alpha:=\left\{\begin{array}{cl} \be_\alpha^\epsilon & \quad
\mbox{if $\hgt(\alpha)\equiv 0 \bmod 2$},\\ \bi\be_\alpha^\epsilon & \quad
\mbox{if $\hgt(\alpha)\equiv 1 \bmod 2$}.\end{array}\right.\]
Then the relations in Theorem~\ref{chbase} translate to:
$\tilde{\omega}(\hat{e}_\alpha)=-\hat{e}_{-\alpha}$ and 
$[\hat{e}_\alpha,\hat{e}_{-\alpha}]=h_\alpha$ for all $\alpha\in\Phi$;
furthermore, $[\hat{e}_\alpha,\hat{e}_\beta]=\pm m_\alpha^-(\beta)
\hat{e}_{\alpha+\beta}$ if $\alpha,\beta,\alpha+\beta\in\Phi$. Hence,
$\{h_i\mid i\in I\}\cup\{\hat{e}_\alpha\mid \alpha\in\Phi\}$ is a 
{\em Chevalley basis} in the sense of \cite[\S 4.2]{Ca1} or 
\cite[\S 25.2]{H}. (Note that Bourbaki \cite[VIII, \S 4, no.~4]{B} uses
a slightly different sign convention.)
\end{rem}

\begin{exmp} \label{expg2a} Let $i\in I$ and $\beta \in\Phi$. Then the 
above results show that
\begin{alignat*}{2}
N_{\alpha_i, \beta}^\epsilon&=+\epsilon(i)m_i^{-}(\beta) &\qquad &\mbox{if
$\beta+\alpha_i\in \Phi$},\\
N_{-\alpha_i, \beta}^\epsilon&=-\epsilon(i)m_i^{-}(-\beta) &\qquad &\mbox{if
$\beta-\alpha_i\in \Phi$}.
\end{alignat*}
(Indeed, if $\beta+\alpha_i\in \Phi$, then we have $[\be_{\alpha_i}^\epsilon, \be_\beta^\epsilon]=\epsilon(i)
[e_i,\varphi(\be_\beta^\epsilon)]=\epsilon(i)\varphi(e_i(v_\beta))=
\epsilon(i)m_i^-(\beta)\varphi(v_{\beta+\alpha_i})=\epsilon(i)
m_i^-(\beta)\be_{\beta+\alpha_i}^\epsilon$, as required; the argument
for $[\be_{-\alpha_i}^\epsilon, \be_\beta^\epsilon]$ is analogous.) Hence, 
a simple induction on the height of roots, plus the rules in 
Theorem~\ref{chbase}, imply that all $N_{\alpha,\beta}^\epsilon$ are 
determined whenever $\alpha,\beta,\alpha+\beta\in\Phi$.

To give an explicit example, assume that $\Phi$ is of type $G_2$, with 
set of simple roots $\Pi=\{\alpha_1,\alpha_2\}$ such that $(\alpha_1,
\alpha_2^\vee)=-3$ and $(\alpha_2,\alpha_1^\vee)=-1$. Let $\epsilon(1)=1$ 
and $\epsilon(2)=-1$. Then $\be_{\alpha_1}^\epsilon=e_1$ and 
$\be_{\alpha_2}^\epsilon=-e_2$. All $N_{\alpha,\beta}^\epsilon$ are 
determined by Table~\ref{bg2} and by using $\tilde{\omega}
(\be_\alpha^\epsilon)=-\be_{-\alpha}^\epsilon$ for $\alpha\in\Phi$.
\end{exmp}

\begin{table}[htbp] \caption{Structure constants $N_{\alpha,
\beta}^\epsilon$ in type $G_2$ for $\epsilon(1)=1$, $\epsilon(2)=-1$} 
\label{bg2} {\small $[\be_{\alpha_1}^\epsilon,\be_{\alpha_2}^\epsilon]=
\be_{\alpha_1+ \alpha_2}^\epsilon,\quad 
[\be_{\alpha_1}^\epsilon,\be_{\alpha_1+3\alpha_2}^\epsilon]=
\be_{2\alpha_1+3\alpha_2}^\epsilon,\quad
[\be_{\alpha_2}^\epsilon,\be_{\alpha_1+\alpha_2}^\epsilon]=
-2\be_{\alpha_1+2\alpha_2}^\epsilon$,\\[3pt]
$[\be_{\alpha_2}^\epsilon,\be_{\alpha_1+2\alpha_2}^\epsilon]=
-3\be_{\alpha_1+3\alpha_2}^\epsilon,\quad
[\be_{\alpha_1+\alpha_2}^\epsilon, \be_{\alpha_1+2\alpha_2}^\epsilon ] = 
-3\be_{2\alpha_1+ 3\alpha_2}^\epsilon$, \\[3pt]
$[\be_{\alpha_1+ \alpha_2}^\epsilon, \be_{-\alpha_1}^\epsilon ] = 
\be_{\alpha_2}^\epsilon,\quad 
[\be_{\alpha_1+\alpha_2}^\epsilon, \be_{-\alpha_2}^\epsilon] = 
-3\be_{\alpha_1}^\epsilon,\quad 
[\be_{\alpha_1+2\alpha_2}^\epsilon, \be_{-\alpha_2}^\epsilon] = 
-2\be_{\alpha_1+\alpha_2}^\epsilon$, \\[3pt]
$[\be_{\alpha_1+2\alpha_2}^\epsilon,\be_{-\alpha_1-\alpha_2}^\epsilon] 
=-2\be_{\alpha_2}^\epsilon, \;\; 
[\be_{\alpha_1+3\alpha_2}^\epsilon, \be_{-\alpha_2}^\epsilon] =
-\be_{\alpha_1+2\alpha_2}^\epsilon, \;\;
[\be_{\alpha_1+3\alpha_2}^\epsilon, \be_{-\alpha_1-
2\alpha_2}^\epsilon] = \be_{\alpha_2}^\epsilon$, \\[3pt]
$[\be_{2\alpha_1+3\alpha_2}^\epsilon,\be_{-\alpha_1}^\epsilon] = 
\be_{\alpha_1+3\alpha_2}^\epsilon, \quad
[\be_{2\alpha_1+3\alpha_2}^\epsilon, \be_{-\alpha_1-\alpha_2}^\epsilon ] = 
\be_{\alpha_1+2\alpha_2}^\epsilon$, \\[3pt]
$[\be_{2\alpha_1+ 3\alpha_2}^\epsilon,\be_{-\alpha_1-2\alpha_2}^\epsilon] = 
\be_{\alpha_1+\alpha_2}^\epsilon,  \quad 
[\be_{2\alpha_1+3\alpha_2}^\epsilon,\be_{-\alpha_1-3\alpha_2}^\epsilon ] = 
\be_{\alpha_1}^\epsilon$.}
\end{table}

Ringel \cite{Ri} (see also Peng--Xiao \cite{PX}) found an entirely 
different method for fixing the signs in the structure constants, 
starting from {\it any} orientation of the Dynkin diagram of $\Phi$ and 
then using the representation theory of quivers and Hall polynomials. The 
functions $\pm \epsilon$ in Lemma~\ref{remadj0} correspond exactly to the 
two orientations in which every vertex is either a sink or a source. We 
note that, in type $G_2$, the signs obtained as in \cite[p.~139]{Ri} are 
different from those in Table~\ref{bg2}.

%%%%%%%%%%%%%%%%%%%%%%%%%%%%%%%%%%%%%%%%%%%%%%%%%%%%%%%%%%%%%%%%%%%%%%%

\end{document}